\numberwithin{equation}{section}
\newtheorem{theorem}{Theorem}[section]
\newtheorem{lemma}[theorem]{Lemma}
\newtheorem{proposition}[theorem]{Proposition}
\theoremstyle{definition}
\newtheorem{definition}[theorem]{Definition}
\theoremstyle{remark}
\newtheorem{remark}[theorem]{Remark}
\theoremstyle{proof}
\newcommand{\db}{\bar{\partial}}
\newcommand{\dd}{\partial}
\newcommand{\rr}[1]{\mathrm{#1}}
\newcommand{\vol}{\rr{Vol}}
\newcommand{\leqs}{\leqslant}
\newcommand{\geqs}{\geqslant}
\newcommand{\rean}{\mathbb{R}}
\newcommand{\cpxn}{\mathbb{C}}
\newcommand{\hypern}{\mathbb{H}}
\newcommand{\proj}{\mathbb{P}}
\newcommand{\iv}{^{-1}}
\newcommand{\oo}{\mathcal{O}}
\newcommand{\kah}{K\"{a}hler }
\newcommand{\ii}{\sqrt{-1}}
\newcommand{\hp}{H_{par}}
\newcommand{\ddp}{\partial_{par}}
\newcommand{\lb}{\mathcal{L}}
\newcommand{\eb}{\mathcal{E}}
\newcommand{\fb}{\mathcal{F}}
\newcommand{\vb}{\mathcal{V}}
\newcommand{\mbar}{\overline{M}}
\DeclareMathOperator{\tr}{tr}
\DeclareMathOperator{\codim}{codim}
\DeclareMathOperator{\id}{id}
\DeclareMathOperator{\End}{End}
\DeclareMathOperator{\rank}{rank}
\DeclareMathOperator{\pardeg}{pardeg}
\DeclareMathOperator{\ord}{ord}
\DeclareMathOperator{\Bl}{Bl}
\DeclareMathOperator{\supp}{supp}
\DeclareMathOperator{\spann}{Span}
\DeclareMathOperator{\Aut}{Aut}
\DeclareMathOperator{\diag}{diag}
\title{Hermitian-Einstein Metrics on Parabolic Bundles over compact complex surfaces}
\author{Xilun Li}
\address{School of Mathematical Sciences\\
Peking University\\
Beijing\\ 100871\\ China}
\email{lxl28@stu.pku.edu.cn}
\author{Gang Tian}
\address{Beijing International Center for Mathematical Research, Peking University, Beijing 100871, China}
\email{gtian@math.pku.edu.cn}
\thanks{}
\keywords{}
\date{}
\dedicatory{}
\begin{document}

\maketitle

\begin{abstract}
    We prove the Kobayashi-Hitchin correspondence for parabolic bundles over compact non\kah surfaces with simple normal crossing divisor or compact non\kah manifolds of any dimension with smooth divisor.
\end{abstract}

\maketitle
\tableofcontents

\section{Introduction}

The celebrated Kobayashi-Hitchin correspondence demonstrates the equivalence of the existence of Hermitian-Einstein metric on a holomorphic vector bundle and slope stability. Moreover, this identification is not only as a bijection of two sets, but also as an analytic isomorphism of moduli spaces\cite{MR1370660}. This correspondence is beautiful and important, and has many interesting extensions, such as vortices case, Higgs bundles and so on, see \cite[p.13-17]{MR1370660} for a detailed introduction.

One of useful extensions is to consider Hermitian-Einstein metric with mild singularities along some divisors $D\subseteq \mbar$. This leads to the concept of parabolic bundles. Roughly speaking, the metric on parabolic bundles is a Hermitian metric on $\eb|_M$ of polynomial growth and with prescribed residues around the divisor $D$, where $M:=\mbar\backslash D$. The parabolic structure was introduced by Mehta and Seshadri\cite{MR575939}, as a generalization of Narasimhan-Seshadri's theorem\cite{MR184252} to noncompact Riemann surface with finite volume case. They show that the irreducible unitary representation of discrete subgroup $\Gamma\subseteq \Aut\hypern$ with finite volume $\hypern/\Gamma$ can be identified to parabolic stable bundle over $\overline{\hypern/\Gamma}$ of parabolic degree $0$. 

It's natural to generalize the Kobayashi-Hitchin correspondence of parabolic bundle version to higher dimension and non\kah cases. \cite{MR1373062,MR1701135} proved the case of compact \kah surfaces and smooth divisor, depending on Simpson's work\cite{MR944577} on Kobayashi-Hitchin correspondence of Higgs bundle over noncompact manifolds. \cite{MR1863850} proved the case of compact \kah surfaces and divisor with normal crossings, using an orbifold argument. \cite{MR1775134} proved the case of compact \kah manifolds of any dimension and divisor with normal crossing, but the reference \kah metric $\omega$ needs to be changed to conical metric of cone angle $2\pi\alpha$, i.e. for $\alpha\in (0,1)$,
\begin{align*}
    \omega_{\alpha}:=\omega+\ii\varepsilon_\alpha\sum_{i=1}^m\dd\db||\sigma_i||^{2\alpha},
\end{align*}
where $\sigma_i$ is the canonical section of
$\oo_X(D_i)$ which vanishes on $D_i$. \cite{MR2310103} proved the parabolic Higgs version for smooth projective varieties with simple normal crossings.  For non\kah case, the standard Kobayashi-Hitchin correspondence was proved by \cite{MR939923} for surface case and \cite{MR915839} for any dimension. The Simpson's work\cite{MR944577} on Higgs bundle was generalized to non\kah case by \cite{MR4237961}.


The arguments in \cite{MR2310103} run the Hermitian-Yang-Mills flow with respect to the conical metric $\omega_{\varepsilon}$ and take a limit to get the Hermitian-Einstein metric with respect to $\omega$. We note that introducing conical metrics is to improve the regularity of $\Lambda_\omega F_H$, where $H$ is the initial metric on $\eb|_M$. Thus to avoid using the conical metrics, we need to construct the initial metric $\hp$ compatible with the parabolic structure carefully such that we have the enough regularity.


In this paper, we construct a parabolic Hermitian metric $\hp$ with bounded $\Lambda_\omega F_{\hp}$ for any parabolic bundles over non\kah surfaces with simple normal crossing divisor, or non\kah manifolds of any dimension with smooth divisor. The construction doesn't need the stability of the parabolic structure. As a consequence, we prove the following parabolic Kobayashi-Hitchin correspondence for non\kah surfaces with simple normal crossing divisor. Our argument can be easily generalized to compact non\kah manifolds of any dimension with smooth divisor.
\begin{theorem}[cf. \Cref{thm-KHset2}]\label{thm-KHset}
    Let $(\mbar,\omega)$ be a compact complex surface with a Gauduchon metric and $D=\sum_{i=1}^m D_i$ be a simple normal crossing divisor. 
    $\eb$ is a holomorphic vector bundle over $\mbar$ with a parabolic structure. Denote $M:=\mbar\backslash D$.

    If $\eb$ is parabolic stable, then there exists a Hermitian-Einstein metric on $\eb|_M$ compatible with
    the parabolic structure with respect to $\omega$.

    If $\eb|_M$ admits a Hermitian-Einstein metric compatible with the parabolic structure with respect to $\omega$ and $\eb$ is indecomposable,
    then $\eb$ is parabolic stable. 
\end{theorem}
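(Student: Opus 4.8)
The plan is to prove the two implications separately, in each case reducing the parabolic problem on $\mbar$ to a curvature equation on $M=\mbar\setminus D$; the point of the initial parabolic metric $\hp$ with $\Lambda_\omega F_{\hp}$ bounded is to let the analysis on the noncompact $M$ proceed as in the compact case. First I would fix the real constant $\lambda$ determined by $\omega$ and by $\pardeg(\eb)/\rank\eb$; this is well defined because $\omega$ being Gauduchon gives $\dd\db\,\omega^{n-1}=0$ (with $n=\dim_\cpxn\mbar=2$), so that $\pardeg$, and hence $\lambda$, is independent of the metric compatible with the parabolic structure used to compute it. I would then look for the Hermitian-Einstein metric in the form $H=\hp e^{s}$ with $s$ a bounded $\hp$-self-adjoint endomorphism of $\eb|_M$, so that the equation to solve is $\ii\Lambda_\omega F_H=\lambda\,\id_\eb$.

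For \emph{parabolic stable $\Rightarrow$ existence}, I would start from $H(0)=\hp$ and run the Hermitian-Yang-Mills flow $H^{-1}\pdts H=-\big(\ii\Lambda_\omega F_H-\lambda\,\id_\eb\big)$. Short-time existence is standard; long-time existence on $M$ would follow from a maximum principle for $|\ii\Lambda_\omega F_H-\lambda\,\id_\eb|$ — the bound on $\Lambda_\omega F_{\hp}$ controlling the initial data and barriers built from the parabolic model metric near $D$ controlling the behaviour there — with the \kah Bochner identities replaced by their Gauduchon analogues since $\omega$ is only assumed Gauduchon. The decisive step is convergence as $t\to\infty$: following the Simpson--Mochizuki scheme, one shows that either $\sup_M|s(t)|$ stays bounded, whence elliptic and parabolic estimates give subconvergence of $H(t)$ to a Hermitian-Einstein metric compatible with the parabolic structure, or else, after rescaling $s(t)$, one extracts a nontrivial weakly holomorphic $L^2_1$ subbundle of $\eb|_M$; by an Uhlenbeck--Yau-type regularity theorem this comes from a saturated coherent subsheaf $\fb\subseteq\eb$ which, by the growth of the metrics near $D$, carries a parabolic structure with $\pardeg(\fb)/\rank\fb\geqs\pardeg(\eb)/\rank\eb$, contradicting parabolic stability. \textbf{The hard part} will be exactly this last step: promoting an analytic subbundle on $M$ to a parabolic subsheaf of $\eb$ on $\mbar$ with the correct parabolic-slope inequality, which is where the precise asymptotics of $\hp$, and the decay of the flow metrics inherited from it, are essential.

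For the converse, \emph{existence $+$ indecomposability $\Rightarrow$ stability}, suppose $\eb|_M$ carries a Hermitian-Einstein metric $H$ compatible with the parabolic structure and let $\fb\subseteq\eb$ be a proper nonzero parabolic subsheaf. Passing to the saturation only increases $\pardeg(\fb)$, so one may assume $\fb|_M$ is a subbundle off a finite set of points; let $\pi$ be the $L^2_1$ orthogonal projection onto $\fb|_M$ with respect to $H$ (so $\pi$ is $H$-self-adjoint, $\pi^2=\pi$, and $(\id-\pi)\db\pi=0$). A parabolic Chern--Weil computation — integrating the Gauss--Codazzi identity for $\pi$ against $\omega^{n-1}$ and reading off the boundary contributions at $D$ from the prescribed residues of $H$, which is precisely where compatibility of $H$ with the parabolic structure is used — gives, using $\ii\Lambda_\omega F_H=\lambda\,\id_\eb$,
\begin{align*}
    \pardeg(\fb)\leqs\frac{\rank\fb}{\rank\eb}\,\pardeg(\eb)-\frac{1}{2\pi}\,\|\db\pi\|_{L^2(M,\omega)}^2 ,
\end{align*}
with equality forcing $\db\pi=0$; hence $\eb$ is parabolic semistable. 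If equality held for some such $\fb$, then $\db\pi=0$, so $\pi$ is a holomorphic endomorphism of $\eb|_M$, bounded near $D$ because $\fb$ is a parabolic subsheaf and $H$ is compatible with the parabolic structure; by Riemann's removable singularity theorem $\pi$ extends to a holomorphic idempotent on $\mbar$, splitting $\eb$ into two nonzero holomorphic subbundles and contradicting indecomposability. Hence the inequality is strict for every proper nonzero parabolic subsheaf, i.e. $\eb$ is parabolic stable.

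The non\kah hypothesis enters only through the Gauduchon condition, which makes $\pardeg$ and $\lambda$ well defined, and through the Gauduchon substitutes for the \kah identities; on a compact non\kah manifold of any dimension with a smooth divisor the same argument applies once $\omega$ is replaced by $\omega^{n-1}$ in the degree integrals and the smoothness of $D$ is used to simplify the analysis near the divisor, giving the stated generalisation.
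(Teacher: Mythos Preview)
Your converse direction is essentially the paper's: both use the Chern--Weil identity $d(\vb,H)=\ii\int_M\tr(\pi\Lambda_\omega F_H)-\int_M|\db\pi|^2_H$ with $\ii\Lambda_\omega F_H=\lambda\,\id_\eb$ to get semistability, then argue that equality forces $\db\pi=0$, extend $\pi$ holomorphically across $D$ using the growth control from compatibility with the parabolic structure, and contradict indecomposability.

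For the forward direction your outline is sound, but the paper takes a shorter route. Rather than running the Hermitian--Yang--Mills flow on $M$ from scratch, it invokes as a black box the existing noncompact non-\kah existence result of \cite{MR4237961} (\Cref{thm-Zhangxi} here): on a Gauduchon $(M,\omega)$ satisfying Simpson's assumptions (A1)--(A3) and $|d\omega^{n-1}|\in L^2$, any $K$-analytically stable bundle with $|\Lambda_\omega F_K|$ bounded admits an HE metric mutually bounded with $K$. The curvature bound is exactly \Cref{prop-tracecurvaturebdd}; (A1)--(A3) and the $L^2$ condition are cheap because $M$ is Zariski open in a compact manifold and $\omega$ extends smoothly. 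All of the genuinely new work is then pushed into Section~3: proving that parabolic stability on $\mbar$ is equivalent to $\hp$-analytic stability on $M$, via the identities $d(\eb,H)=\pardeg_\omega\eb$ and $d(\vb,H)=\pardeg_\omega\vb$ for subsheaves (Propositions~\ref{prop-e-an=par} and~\ref{prop-subsheaf-an=par}), together with a sheaf-extension lemma from $M$ to $\mbar$. This is precisely what you flagged as ``the hard part,'' and the paper settles it by direct degree computations (blowups, Poincar\'e--Lelong, careful $L^1/L^2$ bookkeeping for $\log\Psi$) rather than by tracking flow asymptotics. Your route would in effect reprove \cite{MR4237961} along the way---more self-contained but substantially longer; the paper's route isolates the new content (the stability equivalence for the specific $\hp$ it builds) and delegates the analysis on $M$ to the literature.
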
 
Our proof is also based on results of noncompact non\kah Higgs bundle\cite{MR4237961} and the equivalence of parabolic stability and Simpson's analytic stability\footnote{During the preparation of this paper, we noticed a recently posted paper \cite{jiangli2025} in which the authors improved \cite{MR1775134} for smooth reference \kah metric $\omega$ rather than conical metric $\omega_\alpha$. Their arguments are similar to those in \cite{MR2310103} and may give an alternative proof of Theorem 1.1.}.
\begin{theorem}
     Let $(\mbar^n,\omega)$ be a compact complex manifold with a Gauduchon metric and $D$ be a smooth divisor.
    $\eb$ is a holomorphic vector bundle over $\mbar$ with a parabolic structure. Denote $M:=\mbar\backslash D$.

    If $\eb$ is parabolic stable, then there exists a Hermitian-Einstein metric on $\eb|_M$ compatible with
    the parabolic structure with respect to $\omega$.

    If $\eb|_M$ admits a Hermitian-Einstein metric compatible with the parabolic structure with respect to $\omega$ and $\eb$ is indecomposable,
    then $\eb$ is parabolic stable. 
\end{theorem}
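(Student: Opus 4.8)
\section*{Proof proposal}

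The plan is to transcribe the proof of \Cref{thm-KHset} to the present setting. The only structural change is that the ambient dimension $n$ is now arbitrary while the divisor $D$ is smooth, so the single local model $D=\{z_1=0\}\subseteq\Delta^n$ covers everything and there are no intersection strata of $D$ to analyze -- a simplification relative to the surface-with-normal-crossings bookkeeping, at the price of carrying out the weighted estimates in $n$ complex variables. The first step, which uses no stability hypothesis, is the construction of an initial parabolic hermitian metric $\hp$ on $\eb|_M$ with $\Lambda_\omega F_{\hp}\in L^\infty(M)$. In a local trivialization near $D$ adapted to the parabolic filtration, with weights $\alpha_1\leqs\cdots\leqs\alpha_r$ in $[0,1)$, one sets $\hp=\diag(|z_1|^{2\alpha_1},\dots,|z_1|^{2\alpha_r})$ relative to a fixed smooth background metric and patches by a partition of unity subordinate to a finite cover of $\mbar$. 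A direct computation shows that the principal part of $F_{\hp}$ is of type $(\dif z_1\wedge\dif\bar z_1)/|z_1|^2$ with coefficients that are \emph{constant} along $D$, so that $\Lambda_\omega F_{\hp}$ is bounded; the off-diagonal blocks of the filtration, the patching errors, and the contributions of $\dif\omega\neq 0$ add only bounded quantities, the Gauduchon hypothesis on $\omega$ being harmless. This is precisely the device that lets us dispense with the conical \kah background used in \cite{MR1775134}.

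For the implication ``parabolic stable $\Rightarrow$ Hermitian--Einstein'', with $\hp$ in hand I would invoke the existence theorem for Hermitian--Einstein metrics over the noncompact Gauduchon manifold $M$ from \cite{MR4237961} (Simpson's theorem in the non\kah setting, applied with zero Higgs field) to $(\eb|_M,\hp,\omega)$: its hypotheses are an initial metric with bounded mean curvature -- just established -- together with Simpson's analytic stability. So the remaining point is the equivalence of parabolic stability with Simpson's analytic stability. Given a $\hp$-saturated coherent subsheaf $\fb\subseteq\eb|_M$ with finite analytic $\omega$-degree, one extends it across $D$ to a parabolic subsheaf of $\eb$ and checks that $\deg_\omega(\fb)$ computed with $\hp$ coincides with the parabolic degree of that extension -- the boundary contribution of the $\hp$-second fundamental form along $D$ reproducing exactly the sum of parabolic weights. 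Parabolic stability of $\eb$ then yields the strict slope inequality, hence analytic stability, and \cite{MR4237961} produces a Hermitian--Einstein metric $H$ on $\eb|_M$. Finally the standard a priori estimates give $C\iv\hp\leqs H\leqs C\hp$, so $H$ has polynomial growth and the prescribed residues and is therefore compatible with the parabolic structure.

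For the converse, assume $\eb|_M$ carries a Hermitian--Einstein metric $H$ compatible with the parabolic structure. For any parabolic subsheaf $\fb$, let $\pi\in\End(\eb|_M)$ be the $H$-orthogonal projection onto the associated $H$-saturated subsheaf; compatibility of $H$ forces $\pi$ and $\db\pi$ to have enough decay near $D$ that the Chern--Weil identity of Simpson can be integrated over the noncompact $M$, the boundary term vanishing because $\omega$ is Gauduchon ($\partial\db\,\omega^{n-1}=0$). This gives $\pardeg(\fb)/\rank\fb\leqs\pardeg(\eb)/\rank\eb$ for every parabolic subsheaf, i.e. $\eb$ is parabolic semistable, with equality forcing $\db\pi=0$ and hence a holomorphic splitting of $\eb$; thus $\eb$ is parabolic polystable, and since it is indecomposable it is parabolic stable.

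The step I expect to be the main obstacle is the construction of $\hp$ and its reuse in the degree comparison: making the local estimate on $\Lambda_\omega F_{\hp}$ genuinely uniform up to $D$ in all $n$ variables -- controlling the off-diagonal blocks of the filtration, the patching, and the $\dif\omega$ contributions -- and then establishing the exact identification of the analytic $\omega$-degree (with respect to $\hp$) with the parabolic degree, together with the integrability needed to legitimize the Chern--Weil integration on $M$. Once these weighted near-$D$ estimates are in place, the remainder is a routine transcription of the surface argument.
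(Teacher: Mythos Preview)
Your overall architecture matches the paper exactly: construct an initial metric $\hp$ with $|\Lambda_\omega F_{\hp}|\in L^\infty$, identify parabolic stability with Simpson's analytic stability, invoke \cite{MR4237961} for the existence direction, and use the Chern--Weil identity plus holomorphic extension of the projection for the converse. So the strategy is correct, and you have rightly singled out the construction of $\hp$ as the crux.

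However, the specific construction you sketch --- take a smooth frame adapted to the filtration, set $\hp=\diag(|z_1|^{2\alpha_j})$, and patch by a partition of unity --- does \emph{not} produce a metric with bounded $\Lambda_\omega F_{\hp}$. Writing $\db_\eb e_j=b_j^k e_k$ in such a frame, the curvature of $\hp$ contains off-diagonal terms of the form
\[
\bigl[\dd b_j^k-(\alpha_j-\alpha_k)\,z_1^{-1}dz_1\wedge b_j^k\bigr]\,|z_1|^{\alpha_k-\alpha_j},\qquad \alpha_j>\alpha_k,
\]
and after applying $\Lambda_\omega$ these are bounded only if $\Lambda_\omega\dd b_j^k=O(|z_1|)$ and $\Lambda_\omega(dz_1\wedge b_j^k)=O(|z_1|^2)$. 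A generic smooth extension of the filtration off $D$ yields at best $b_j^k=O(|z_1|)$, which is one order short. The paper's device for manufacturing the extra order is not a partition of unity but: (i) a foliation of a tubular neighborhood of $D$ by holomorphic discs $B_x$ orthogonal to $D$; (ii) a background metric $H_1$ obtained by solving the \emph{flat} Hermitian--Einstein Dirichlet problem on each leaf $B_x$; and (iii) extension of the filtration $\fb_j$ by $\nabla^{H_1}$-parallel transport along the leaves. Parallel transport forces $\langle b_j^k, T^{0,1}B_x\rangle\equiv 0$ identically along each leaf, and differentiating this in the leaf direction is what yields the required second-order vanishing of the normal component of $b_j^k$. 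This construction (stated in the paper for arbitrary $n$) is the missing idea; once you have it, the rest of your outline goes through verbatim.
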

The proof of Theorem 1.2 is identical to that of Theorem 1.1\footnote{It is unclear if the arguments in \cite{jiangli2025} can work for non\kah manifolds of high dimensions. The subtlity is that one needs Gauduchon metrics in order to define the degree of coherent sheaves. However, given an non\kah Gauduchon metric $\omega$, the conical one $\omega_\alpha$ may not be Gauduchon when $\dim_\cpxn M\geqs3$. On the other hand, if we modify $\omega_\alpha$ to be Gauduchon, the degree may be changed. In a forthcoming paper, we will extend Theorem 1.2 to non\kah manifolds and divisors with normal crossing.}.

The organization of this paper is as follows: 

In Section 2, we review some concepts of the parabolic bundle, and construct an initial metric $\hp$ with $|\Lambda_\omega F_{\hp}|_{\hp}\in L^\infty$. Moreover, we state the main theorem.

In Section 3, we introduce Simpson's assumptions and concepts of analytic stability. Then we show the equivalence of parabolic stability and analytic stability.

In Section 4, we give the proof of \Cref{thm-KHset}.

In Section 5, we prove some lemmas used in the paper as an Appendix.
\newline

\textbf{Acknowledgements} The first author would like to thank Zexuan Ouyang, Xin Fu, Minghao Miao and Shengxuan Zhou for helpful discussions. Authors are supported by National Key R\&D Program of China 2020YFA0712800.

\section{Preliminary}
    Let $(\mbar,\omega)$ be a compact complex surface admitting a Gauduchon 
metric, i.e. $\dd\db\omega^{n-1}=0$, where $n=\dim_\cpxn\mbar=2$.  Let $D=\sum_{i=1}^m D_i$ be a simple normal crossing divisor
in $\mbar$, and we take $M:=\mbar\backslash D$. $\eb$ is a holomorphic vector bundle over $\mbar$
of rank $r$. 
\begin{remark}
    By Gauduchon's result\cite{MR0742896}, each 
conformal class of a Hermitian metric of $\mbar$ admits a Gauduchon metric,
which is unique up to a multiple constant.
\end{remark}
\begin{definition}
    A \emph{parabolic bundle} is a holomorphic vector bundle $\eb$ with a parabolic
structure with respect to $D$, which consists of
    \begin{itemize}
        \item flags of $\eb|_{D_i}$:
            \begin{align*}
                \eb|_{D_i}=\fb_1^i\supseteq \fb_2^i\supseteq\cdots\supseteq \fb^i_{m_i}\supseteq 0,
            \end{align*}
            where $\fb_{j+1}^i\subseteq \fb_j^i$ is a proper subbundle. Moreover, the
            flags satisfy the compatibility condition: For each $1\leqs i_1,i_2\leqs m$,
                $\left\{\fb^{i_k}_j|_{D_{i_1}\cap D_{i_2}}:k\in \{1,2\}, 1\leqs j\leqs m_{i_k}\right\}$
            is a refined flag of $\left\{\fb^{i_k}_j|_{D_{i_1}\cap D_{i_2}}: 1\leqs j\leqs m_{i_k}\right\}$
            for each $k\in\{1,2\}$.
        \item weights $0\leqs\alpha_1^i<\alpha_2^i<\cdots<\alpha_{m_i}^i<1$ attached
            to $\fb^i_1,\cdots,\fb^i_{m_i}$.
    \end{itemize}
\end{definition}
\subsection{Parabolic stability}
To define the parabolic degree, we first recall the definition of the classical
degree of a coherent sheaf.
\begin{definition}
    Let $\lb$ be a holomorphic line bundle over $\mbar$, then the \emph{degree}
    of $\lb$ with respect to $\omega$ is defined by 
    \begin{align*}
        \deg_{\omega}\lb:=\frac{\ii}{2\pi}\int_{\mbar}\tr F_{h}\wedge\omega^{n-1},
    \end{align*}
    where $h$ is a Hermitian metric of $\lb$, $F_h$ is the curvature of Chern 
    connection. For a general coherent sheaf $\fb$ of rank $r$, $\det\fb:=(\wedge^r\fb)^{**}$
    is always a holomorphic line bundle, then the degree is defined by 
    \begin{align*}
        \deg_{\omega}\fb:=\deg_{\omega}(\det\fb).
    \end{align*}
\end{definition}
\begin{remark}
    For two Hermitian metric of $\lb$, $\tr F_h$ differs a $\dd\db$-exact term. It induces a cohomological class $c_1^{BC}(\lb):=[\frac{\ii}{2\pi}\tr F_{h}]\in H^{1,1}_{BC}(\mbar,\rean)$, where the Bott-Chern cohomology is defined by
    \begin{align*}
        H^{1,1}_{BC}(\mbar,\rean):=\frac{\{\alpha\in\Lambda^{1,1}(\mbar,\rean):d\alpha=0\}}{\{\ii\dd\db f:f\in C^\infty(\mbar,\rean)\}}.
    \end{align*}
    Since $\omega$ is a Gauduchon metric, the degree is well defined, which is independent on the choice of the Hermitian metric.
\end{remark}
\begin{definition}
        The \emph{parabolic degree} of a parabolic bundle $\eb$ with respect to $\omega$
    is defined by 
    \begin{align*}
        \pardeg_{\omega} \eb:=\deg_{\omega}\eb+\sum_{i=1}^m\sum_{j=1}^{m_i}\rank(\fb^i_j/ \fb^i_{j+1})\alpha^i_j\deg_{\omega} [D_i],
    \end{align*}
    where $[D_i]$ is the line bundle induced by $D_i$.
\end{definition}
Suppose $\vb$ is a coherent subsheaf of $\eb$ with torsion free quotient.
Then there exists a natural induced parabolic structure:
\begin{itemize}
    \item The flag of coherent subsheaves
    \begin{align*}
        \vb|_{D_i}=\fb^i_1\vb\supseteq \cdots\supseteq \fb^i_{n_i}\vb\supseteq 0
    \end{align*}
    is induced by $\fb^i_1\cap \vb\supseteq\cdots\supseteq \fb^i_{m_i}\cap \vb\supseteq 0$.
    \item The weights attached to the flag are defined by
    \begin{align*}
        \beta^i_j:=\max\{\alpha^i_k:\fb^i_j\vb\subseteq \fb^i_k\cap\vb\}.
    \end{align*}
\end{itemize}
Thus we can also define the parabolic degree of the subsheaf $\vb$:
\begin{align*}
    \pardeg_{\omega} \vb:=\deg_{\omega}\vb+\sum_{i=1}^m\sum_{j=1}^{n_i}\rank(\fb^i_j\vb/ \fb^i_{j+1}\vb)\beta^i_j\deg_{\omega} [D_i],
\end{align*}
\begin{remark}
    Here we abuse the terminology a little since we only construct a flag of coherent subsheaves rather than subbundles. Nevertheless, it's sufficient to define the parabolic degree and parabolic stability. 
\end{remark}
Now we can define the parabolic stability.
\begin{definition}
    A parabolic bundle $\eb$ is called \emph{parabolic (semi-)stable} with respect to $\omega$ if for any proper coherent subsheaf $\vb$ of $\eb$ with torsion free quotient, we have
\begin{align*}
    \frac{\pardeg_{\omega}\vb}{\rank \vb}(\leqs)<\frac{\pardeg_{\omega}\eb}{\rank\eb}.
\end{align*}
\end{definition}

\subsection{Metrics compatible with the parabolic structure}

Since the parabolic Hermitian-Einstein problem looks for the canonical metric on the holomorphic bundle $\eb|_{M}$ over the noncompact manifold $M$, it's necessary to determine the asymtotic behaviour of the metric near $D$. To do this, we construct an initial Hermitian metric $H_{par}$ with polynomial growth compatible with the parabolic structure.

First we fix a smooth Hermitian metric $H_1$ on $\eb$ over $\mbar$, which is determined later. Let $U_i$ be the tubular neighborhood of $D_i$, and $U:=\bigcup U_i$. We assume $F^i_j$ are smooth Hermitian vector bundles over $U_i$ such that $F^i_j|_{D_i}=\fb^i_j$ and $F^i_{j+1}\subset F^i_j$, which are also determined later. By taking orthogonal complements with respect to $H_1$, we have smooth Hermitian vector bundles $Q^i_j (1\leqs j\leqs m_i)$ over $U_i$ such that they are mutually vertical and $\bigoplus_{l\geqs j}Q^i_l=F^i_j$ for any $j$. Thus we have a smooth orthogonal decomposition
	\begin{align*}
		(\eb|_{U_i},H_1)=(Q^i_1,H_1|_{Q^i_1})\oplus\cdots\oplus (Q^i_{m_i},H_1|_{Q^i_{m_i}}).
	\end{align*}
 Now we can define a Hermitian metric $\hp$ on $\eb|_M$ by taking
	\begin{align*}
		H_{par}|_{U_i\backslash D}:=\bigoplus_j ||\sigma_1||^{2\alpha^1_j}\cdots||\sigma_m||^{2\alpha^m_j}H_1|_{Q^i_j},
	\end{align*}
and extending $H_{par}$ on $M\backslash U$ smoothly. To make it well-defined, we need to check $H_{par}|_{U_i\backslash D}$ and $H_{par}|_{U_j\backslash D}$ coincide on $U_i\cap U_j$. We first compute its curvature:
\begin{proposition}\label{prop-Fcurvature-compute}
	Let $\hp$ be the metric defined above, $p\in V\subset U_i\backslash(\cup_{j\neq i}U_j)$ and $p\notin D$. $\{e_1,\cdots,e_r\}$ are the local orthonormal frame of $\eb|_V$ with respect to $\eb|_V=\bigoplus Q^i_j|_V$. Let $\nabla=\ddp+\db_\eb$ be the Chern connection of $\hp$ and $\db_\eb e_j=b_j^k e_k$, where $b_j^k\in A^{0,1}(\overline{V})$. We assume $b_j^k|_D=0$ for any $\alpha^i_j>\alpha^i_k$.
	Then
	\begin{enumerate}
		\item $|F_{\hp}|_{\hp}=O(||\sigma_i||^{-\gamma})$, where $\gamma=\max\{\alpha^i_{j+1}-\alpha^i_j,1-(\alpha^i_{j+1}-\alpha^i_j)\}$. So $F_{\hp}\in L^p(M)$ for some $p>2$.
		\item $|\Lambda_\omega F_{\hp}|_{\hp}=O(1)$ if the following holds:
			\begin{align*}
				\Lambda_\omega\dd b_j^k=O(||\sigma_i||), \Lambda_\omega(d\sigma_i\wedge b_j^k)=O(||\sigma_i||^2), \Lambda_\omega(d\sigma_i\wedge b_k^j)=O(||\sigma_i||) \text{ for any }\alpha^i_j>\alpha^i_k.
			\end{align*}
	\end{enumerate}
\end{proposition}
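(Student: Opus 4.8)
The statement is a local curvature computation near a single divisor component $D_i$, so the whole argument takes place in the coordinate neighbourhood $V\subset U_i$ where all the other $\|\sigma_j\|$ ($j\neq i$) are bounded above and below and contribute only smooth bounded factors. I would first reduce to the model situation: writing $t:=\|\sigma_i\|^2$, the metric $\hp$ on $V\setminus D$ is, in the orthonormal frame $\{e_k\}$ adapted to the decomposition $\eb|_V=\bigoplus_j Q^i_j|_V$, a block-diagonal conformal modification $\hp = (h_{k\bar l})$ with $h_{k\bar k}= t^{\alpha(k)}\cdot(\text{smooth positive})$, where $\alpha(k)=\alpha^i_j$ for $e_k\in Q^i_j$ (absorbing the bounded $\prod_{j\neq i}\|\sigma_j\|^{2\alpha^{\,\cdot}_j}$ into the smooth factor). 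The Chern curvature is $F_{\hp}=\db(\,\hp^{-1}\dd\hp\,)$; since $\db_\eb e_j=b_j^k e_k$ encodes the holomorphic structure, I would compute $\dd\hp$ and $\hp^{-1}$ explicitly in this frame, separating the \emph{diagonal conformal part} (coming from $\dd t^{\alpha(k)} = \alpha(k) t^{\alpha(k)-1}\dd t$) from the \emph{off-diagonal part} (coming from the $b_j^k$ and their conjugates).

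**Key steps.** (1) Compute $F_{\hp}$ blockwise. The diagonal $(k,k)$-entry picks up a term $\alpha(k)\,\db\dd\log t$ plus smooth terms; since $\db\dd\log\|\sigma_i\|^2$ is the pullback of a smooth form away from $D_i$ and has a pole of order... actually it is smooth and bounded on all of $\mbar$ because $-\ii\dd\db\log\|\sigma_i\|^2$ represents (minus) the curvature of $[D_i]$ plus the Poincaré–Lelong current — so on $M$ the smooth part is bounded, giving an $O(1)$ contribution there. The genuinely singular contributions come from the off-diagonal entries: a typical one looks like $t^{\alpha(k)-\alpha(j)}\big(\dd b_j^k + (\text{stuff})\wedge b_j^k + \alpha\,\tfrac{\dd t}{t}\wedge b_j^k\big)$ for $\alpha(j)>\alpha(k)$, times smooth factors, together with its transpose-type partner $t^{\alpha(j)-\alpha(k)}$-weighted. (2) Estimate pointwise norms $|\cdot|_{\hp}$: because $\hp$ scales the $e_k$ by $t^{\alpha(k)/2}$, an off-diagonal $(k,j)$-form component $\phi^k_{\ j}\,(e_j^*\otimes e_k)$ has $\hp$-norm $\sim t^{(\alpha(k)-\alpha(j))/2}|\phi^k_{\ j}|$ — this is the crucial bookkeeping that converts the naive $t^{\alpha(k)-\alpha(j)}$ coefficients into the symmetric exponent. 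Combining with the assumption $b_j^k|_D=0$ for $\alpha^i_j>\alpha^i_k$ (so $b_j^k=O(\|\sigma_i\|)=O(t^{1/2})$, and its differential $\dd b_j^k=O(1)$, while $\dd t/t = O(t^{-1/2})\cdot(\text{bounded})$ in the relevant norm), one reads off that each block of $F_{\hp}$ is $O(t^{-\gamma/2})=O(\|\sigma_i\|^{-\gamma})$ with $\gamma=\max_j\{\alpha^i_{j+1}-\alpha^i_j,\,1-(\alpha^i_{j+1}-\alpha^i_j)\}$; since each $\alpha^i_{j+1}-\alpha^i_j\in(0,1)$ we get $\gamma<1$, hence $F_{\hp}\in L^p$ for $2<p<2/\gamma$, proving (1). (3) For (2), apply $\Lambda_\omega$. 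The point is that $\Lambda_\omega(\db\dd\log t)$ is bounded on $M$ (again from Poincaré–Lelong), so the diagonal contracts to $O(1)$; for the off-diagonal blocks the three hypotheses are exactly tailored so that $\Lambda_\omega\dd b_j^k = O(\|\sigma_i\|) = O(t^{1/2})$ kills the $t^{-1}$ in the worst term, $\Lambda_\omega(d\sigma_i\wedge b_j^k)=O(t)$ handles the $\dd t\wedge b_j^k$ contribution in the $t^{\alpha(k)-\alpha(j)}$-weighted block (whose $\hp$-norm gets an extra $t^{(\alpha(k)-\alpha(j))/2}$, $\alpha(k)<\alpha(j)$), and $\Lambda_\omega(d\sigma_i\wedge b_k^j)=O(\|\sigma_i\|)$ handles the conjugate block $t^{\alpha(j)-\alpha(k)}$. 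Tracking the weights shows every term contracts to $O(1)$.

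**Main obstacle.** The hard part is the careful norm bookkeeping in step (2): one must be scrupulous about (a) which $\hp$-norm an $\End\eb$-valued $(1,1)$-form component carries after the conformal rescaling by $t^{\alpha(k)/2}$, since this is what symmetrizes the exponents and produces $\gamma$ rather than the naive $\alpha^i_{m_i}-\alpha^i_1$, and (b) correctly separating, inside each off-diagonal block, the three structurally different singular pieces ($\dd b$, $\dfrac{\dd t}{t}\wedge b$ from the conformal factor hitting the holomorphic part, and the smooth-background $\wedge b$ terms) and matching each against the right hypothesis. A secondary subtlety is verifying that the compatibility condition $b_j^k|_D=0$ for $\alpha^i_j>\alpha^i_k$ is legitimately available — i.e.\ that the smooth subbundles $F^i_j$ extending the flags can be chosen so that the $\db_\eb$-connection matrix is block-upper-triangular on $D_i$ to this order; but that is part of the ``determined later'' freedom in the construction of $\hp$, so for the purpose of this proposition it may be taken as a hypothesis. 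I would also remark that the computation is entirely local and pointwise, so no integration or global analysis on $\mbar$ enters here beyond the boundedness of $\Lambda_\omega\ii\dd\db\log\|\sigma_i\|^2$ on $M$, which follows from Poincaré–Lelong since $\omega$ is a smooth metric on $\mbar$.
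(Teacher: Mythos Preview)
Your proposal is correct and follows essentially the same approach as the paper: compute the Chern connection and curvature explicitly in the $H_1$-orthonormal frame, pass to the rescaled frame $\tilde e_j=\|\sigma_i\|^{-\alpha_j}e_j$ (your ``$\hp$-norm bookkeeping'') to read off the pointwise growth, and then match the three singular off-diagonal pieces $\dd b_j^k$, $z_1^{-1}dz_1\wedge b_j^k$, $\bar z_1^{-1}d\bar z_1\wedge\overline{b}_k^j$ against the hypotheses in (2). One caution: the formula $F_{\hp}=\db(\hp^{-1}\dd\hp)$ is only literally valid in a \emph{holomorphic} frame, so in the smooth frame $\{e_j\}$ you should instead derive the $(1,0)$-connection matrix $a_j^k$ from metric compatibility $\dd\hp(e_j,e_k)=\hp(a_j^\ell e_\ell,e_k)+\hp(e_j,b_k^\ell e_\ell)$ and then use $F=(\dd b+\db a - b\wedge a - a\wedge b)$, exactly as the paper does --- but your subsequent discussion of the $b_j^k$ contributions makes clear you already have this in mind.
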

\begin{proof}
	For simplicity of the notation, we assume $m_i=r$, i.e. each of the weights has multipicity 1. General case is quite similar. Let $V=\{(z_1,\cdots,z_n):|z_k|<1,\forall k\}$ and assume $||\sigma_i||^2=f|z_1|^2$ for some smooth function $f$ with $\log f\in C^\infty(\overline{V})$. Since $V\subset U_i\backslash(\cup_{j\neq i}U_j)$, we can write $\hp|_V$ as
	\begin{align*}
		\hp|_V=f_1||\sigma_i||^{2\alpha_1}h_1\oplus\cdots\oplus f_r||\sigma_i||^{2\alpha_r}h_r,
	\end{align*}
	where $\log f_j\in C^\infty(\overline{V})$, $\alpha_j=\alpha^i_j$ and $h_j=H_1|_{Q^i_j}$. Suppose $\ddp e_j=a_j^k e_k$, where $a_j^k\in A^{1,0}(V\backslash D)$. Since Chern connection is compatible with the Hermtian metric, we have
	\begin{align*}
		\dd\hp(e_j,e_k)=\hp(a_j^le_l,e_k)+\hp(e_j,b_k^le_l).
	\end{align*}
	By $\hp(e_j,e_k)=f_j||\sigma_i||^{2\alpha^i_j}\delta_{jk}$, we have
	\begin{align*}
		a_j^k=\delta_{jk}\dd\log f_j+\alpha_j\delta_{jk}(\dd\log f+z_1\iv dz_1)-\overline{b}_k^j||\sigma_i||^{2(\alpha_j-\alpha_k)}f_jf_k\iv.
	\end{align*}
	By definition, we have
	\begin{align*}
		F e_j=&\nabla^2e_j=(\ddp\db_\eb+\db_\eb\ddp)e_j
		=(\dd b_j^k+\db a_j^k-b_j^l\wedge a_l^k-a_j^l\wedge b_l^k)e_k.
	\end{align*}
	To compute the norm with respect to $\hp$, we use another frame $\{\tilde{e}_1,\cdots,\tilde{e}_r \}$ of $\eb|_{V\backslash D}$, where $\tilde{e}_j:=||\sigma_i||^{-\alpha_j}e_j$. Then the curvature tensor is $F_{\hp}\tilde{e}_j=\tilde{F}_j^k\tilde{e}_k$, where
	\begin{align*}
		\tilde{F}_j^k=&\dd b_j^k||\sigma_i||^{\alpha_k-\alpha_j}-\db\overline{b}^j_k||\sigma_i||^{\alpha_j-\alpha_k}f_jf_k\iv+(\alpha_j-\alpha_k)\overline{b}^j_k\wedge(\db\log f+\bar{z}_1\iv d\bar{z}_1)||\sigma_i||^{\alpha_j-\alpha_k}f_jf_k\iv\\
		&-(\alpha_j-\alpha_k)(\dd\log f+z_1\iv dz_1)\wedge b_j^k||\sigma_i||^{\alpha_k-\alpha_j}+\overline{b}^j_k\wedge\db(f_j f_k\iv)||\sigma_i||^{\alpha_j-\alpha_k}\\
		&-\dd\log(f_j f_k\iv)\wedge b_j^k||\sigma_i||^{\alpha_k-\alpha_j}+f_jf_l\iv||\sigma_i||^{\alpha_j+\alpha_k-2\alpha_l}\overline{b}^j_l\wedge b^k_l+f_l f_k\iv||\sigma_i||^{2\alpha_l-\alpha_j-\alpha_k}b^l_j\wedge\overline{b}^l_k.
	\end{align*}
	We have $\overline{\tilde{F}^j_k}=-f_kf_j\iv\tilde{F}^k_j$, so we only need to consider $j\geqs k$. Since we assume $b^r_s|_D=0$ for any $s>r$, we have $b^r_s=O(||\sigma_i||)$.
	So
	\begin{align*}
		f_jf_l\iv||\sigma_i||^{\alpha_j+\alpha_k-2\alpha_l}\overline{b}^j_l\wedge b^k_l=f_jf_l\iv(||\sigma_i||^{\alpha_j-\alpha_l}\overline{b}^j_l)\wedge (||\sigma_i||^{\alpha_k-\alpha_l}b^k_l)=O(1).
	\end{align*}
	Now
	\begin{align*}
		\tilde{F}_j^k=&\left[\dd b_j^k-(\alpha_j-\alpha_k)z_1\iv dz_1\wedge b_j^k\right]||\sigma_i||^{\alpha_k-\alpha_j}+(\alpha_j-\alpha_k)\overline{b}^j_k\wedge\bar{z}_1\iv d\bar{z}_1||\sigma_i||^{\alpha_j-\alpha_k}f_jf_k\iv+O(1).
	\end{align*}
	This proves the proposition.
	 \end{proof}
Now we consider the case that $\mbar$ is a compact surface. We will choose a special background metric $H_1$ and special extensions $F^i_j$ of $\fb^i_j$ such that $F_{\hp}\in L^p$ for some $p>2$ and $|\Lambda_\omega F_{\hp}|=O(1)$. The construction is a generalization of that in \cite{MR1373062}.

First, we can assume $\eb|_{U_i\cap U_j}$ is trivial after possibly shrinking $U$ since $D_i\cap D_j$ is just a set of finite points. Given holomorphic functions $f_1(z_1)$ and $f_2(z_2)$ with $f_1(0)=f_2(0)$, there exists a holomorphic function $f(z_1,z_2)$ such that $f(z_1,0)=f_1(z_1)$ and $f(0,z_2)=f_2(z_2)$ by simply taking $f(z_1,z_2):=f_1(z_1)+f_2(z_2)-f_1(0)$. Thus on $U_i\cap U_j$, we can extend $\fb^i_k$ and $\fb^j_k$ holomorphically, i.e. there exist holomorphic vector bundles $\fb^{ij}_k$ over $U_i\cap U_j$ such that $\fb^{ij}_k|_{D_i\cap U_j}=\fb^i_k$, $\fb^{ij}_k|_{D_j\cap U_i}=\fb^j_k$. Now we have a flag of trivial holomorphic vector bundles:
\begin{align*}
	\eb|_{U_i\cap U_j}=\fb^{ij}_1\supseteq \fb^{ij}_2\supseteq\cdots \supseteq \fb^{ij}_r\supseteq0.
\end{align*}
Let $\{e^{ij}_k\}$ be the holomorphic frame such that $\fb^{ij}_k=\spann\{e^{ij}_k,\cdots,e^{ij}_r\}$. Now we can define $H_1|_{U_i\cap U_j}$ such that $H_1(e^{ij}_k,e^{ij}_l):=\delta_{kl}$. Note that $F(H_1|_{U_i\cap U_j})=0$.

Next, we extend $H_1|_{U_i\cap U_j}$ smoothly on the whole $\mbar$ to get Hermitian metric $H_2$. By \Cref{lmm-foliation}, there exists a smooth foliation of the tubular neighborhood of $D_i$. For any $x\in D_i$, let $B_x$ be the leaf of $x$, which is a complex disc and orthogonal to $D_i$. By \cite{MR1165874}, we can solve the Dirichlet problem
\begin{align*}
	\begin{cases}
		F(H_1|_{B_x})=0,\\
		H_1|_{\partial B_x}=H_2.
	\end{cases}
\end{align*}
The solution always exists and is unique. Due to uniqueness, $H_1$ coincides with the metric we constructed on $U_i\cap U_j$.

Let $\nabla_1$ be the Chern connection of $H_1$, we can extend $\fb^i_j$ smoothly to $F^i_j$ by parallel transport along $B_x$ with respect to $\nabla_1$. On $U_i\cap U_j$, since $\nabla_1 e^{ij}_k=0$, we have $F^i_k|_{U_i\cap U_j}=\fb^{ij}_k=F^j_k|_{U_i\cap U_j}$. Then $Q^i_k|_{U_i\cap U_j}=Q^j_k|_{U_i\cap U_j}$. So $\hp$ is well defined.
\begin{lemma}\label{lem-Fbdd-local}
	Let $\hp$ be the metric constructed above. Then $\left|F_{\hp}\right|_{\hp}$ is bounded on $U_i\cap U_j$.
\end{lemma}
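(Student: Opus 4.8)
The plan is to apply Proposition \ref{prop-Fcurvature-compute} on $U_i \cap U_j$ with respect to \emph{both} divisors simultaneously, and verify that the special choices of $H_1$ and the extensions $F^i_k$ force the hypotheses of part (1) to hold with all the relevant growth exponents actually vanishing, so that $|F_{\hp}|_{\hp}$ is not merely $L^p$ but genuinely bounded. Concretely, on $U_i \cap U_j$ we have the holomorphic frame $\{e^{ij}_k\}$ adapted to the flag $\fb^{ij}_1 \supseteq \cdots \supseteq \fb^{ij}_r$, and by construction $H_1(e^{ij}_k, e^{ij}_l) = \delta_{kl}$ with $F(H_1|_{U_i\cap U_j}) = 0$. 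The orthonormal frame $\{e_1,\dots,e_r\}$ for the $H_1$-orthogonal decomposition can be taken to be exactly $\{e^{ij}_k\}$ here, and since these are holomorphic sections, $\db_\eb e_j = 0$, i.e. all the connection forms $b^k_j$ vanish identically on $U_i \cap U_j$.

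With $b^k_j \equiv 0$, I would revisit the curvature formula for $\tilde F^k_j$ derived in the proof of Proposition \ref{prop-Fcurvature-compute}: every term there contains at least one factor of $b^k_j$ or $\db \overline b^j_k$ or $\dd b^k_j$, except we must be careful about the $z_1^{-1} dz_1$ contributions. Here, working with two divisors meeting at a point, $\hp|_{U_i\cap U_j}$ has the block-diagonal form $\bigoplus_k \|\sigma_i\|^{2\alpha^i_k}\|\sigma_j\|^{2\alpha^j_k} h_k$ with $h_k = H_1|_{Q_k}$ and $F(h_k) = 0$; a direct computation of the Chern curvature of a diagonal metric $\mathrm{diag}(\dots \|\sigma_i\|^{2\alpha^i_k}\|\sigma_j\|^{2\alpha^j_k}\dots)$ built from flat pieces gives $F_{\hp} = \sum_k (\alpha^i_k \,\partial\db\log\|\sigma_i\|^2 + \alpha^j_k\,\partial\db\log\|\sigma_j\|^2) \,\Pi_k$ where $\Pi_k$ is the projection onto the $k$-th line — and this is manifestly smooth and bounded on $U_i\cap U_j$, since $\log\|\sigma_i\|^2 = \log f_i + \log|z_i|^2$ and $\partial\db\log|z_i|^2 = 0$ away from $D_i$. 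The point is that the frame change $\tilde e_j = \|\sigma_i\|^{-\alpha_j}\|\sigma_j\|^{-\alpha_j}e_j$ is genuinely parallel with respect to the relevant piece and conjugating away the weight functions leaves only $\partial\db$ of the \emph{smooth} factors $\log f_i, \log f_j, \log h_k$, all of which extend smoothly across $D_i \cup D_j$.

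So the proof reduces to: (i) record that on $U_i \cap U_j$, with the holomorphic frame $\{e^{ij}_k\}$ and the flat $H_1$, all $b^k_j$ vanish; (ii) substitute $b^k_j = 0$ into (the two-divisor analogue of) the expression for $\tilde F^k_j$ in the proof of Proposition \ref{prop-Fcurvature-compute}, observe every surviving term is $\partial\db$ of something smooth up to $D$, hence $O(1)$; (iii) conclude $|F_{\hp}|_{\hp}$ is bounded on $U_i\cap U_j$. I would write this as a short argument, essentially invoking the computation already carried out in Proposition \ref{prop-Fcurvature-compute} and noting the simplification when the off-diagonal connection forms vanish.

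The main obstacle I anticipate is bookkeeping rather than conceptual: Proposition \ref{prop-Fcurvature-compute} was stated and proved for a single divisor $D_i$ with $V \subset U_i \setminus \bigcup_{j\neq i} U_j$, whereas here we sit on the overlap $U_i \cap U_j$ where two normal-crossing components interact, so one must either re-derive the curvature formula allowing a product $\|\sigma_i\|^{2\alpha_k}\|\sigma_j\|^{2\alpha_k}$ of two vanishing-locus norms, or argue that the two variables decouple (which they do, since $D_i$ and $D_j$ are smooth and transverse and the holomorphic splitting $f(z_1,z_2) = f_1(z_1) + f_2(z_2) - f_1(0)$ was chosen precisely to make the extended flag holomorphic in both directions). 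The key simplifying fact making this painless is that the holomorphicity of the adapted frame kills \emph{all} the $b^k_j$ at once, not just those with $\alpha^i_j > \alpha^i_k$, so the dangerous cross terms $z_1^{-1}dz_1 \wedge b^k_j$ and $\bar z_1^{-1}d\bar z_1 \wedge \bar b^j_k$ — which are exactly the terms responsible for the $O(\|\sigma_i\|^{-\gamma})$ blow-up in general — are identically zero on the overlap.
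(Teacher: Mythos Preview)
Your proposal is correct and is essentially the same argument as the paper's. The paper's proof is simply the direct computation you sketch in your second paragraph: since $\{e^{ij}_k\}$ is a \emph{holomorphic} frame in which $H_1$ is the identity, $\hp$ is diagonal with entries $\prod_s\|\sigma_s\|^{2\alpha^s_k}$, so $(F_{\hp})^l_k=\delta^l_k\,\db\dd\log\big(\prod_s\|\sigma_s\|^{2\alpha^s_k}\big)$, and writing $\|\sigma_i\|^2=f_1|z_1|^2$, $\|\sigma_j\|^2=f_2|z_2|^2$ kills the singular pieces, leaving only $\db\dd$ of smooth functions. Your detour through Proposition~\ref{prop-Fcurvature-compute} with $b^k_j\equiv 0$ is unnecessary (and, as you note, that proposition was stated away from the overlap), but the content is identical: holomorphicity of the adapted frame forces the metric to be diagonal, and the rest is immediate.
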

\begin{proof}
	Since $H_1(e^{ij}_k,e^{ij}_l)=\delta_{ij}$, we have
	\begin{align*}
		\hp(e^{ij}_k,e^{ij}_l)=\delta_{kl}||\sigma_1||^{2\alpha^1_k}\cdots||\sigma_m||^{2\alpha^m_k}.
	\end{align*}
	Then
	\begin{align*}
		(F_{\hp})_k^l=\db(\dd\hp \hp\iv)_k^l=\delta_k^l(\alpha^i_k\db\dd\log||\sigma_i||^2+\alpha^j_k\db\dd\log||\sigma_j||^2+\db\dd\log c_k),
	\end{align*}
	where $c_k=\prod_{s\neq i,j}||\sigma_s||^{2\alpha^s_k}$.
	
	On $U_i\cap U_j$, we can assume $||\sigma_i||^2=f_1|z_1|^2$, $||\sigma_j||^2=f_2|z_2|^2$. Thus
	\begin{align*}
		(F_{\hp})_k^l=\delta_k^l(\alpha^i_k\db\dd\log f_1+\alpha^j_k\db\dd\log f_2+\db\dd\log c_k)=O(1).
	\end{align*} 
\end{proof}
\begin{proposition}\label{prop-tracecurvaturebdd}
	Let $\hp$ be the metric constructed above. Then $\big|\Lambda_\omega F_{\hp}|_{\hp}$ is bounded on $M$.
\end{proposition}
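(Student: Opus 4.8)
The plan is to cover $M$ by three kinds of regions. On the compact set $\mbar\setminus U\subset M$ the metric $\hp$ is smooth, so $|\Lambda_\omega F_{\hp}|_{\hp}$ is bounded there; on each overlap $U_i\cap U_j$ it is bounded by \Cref{lem-Fbdd-local}; and on $U_i\setminus(\cup_{j\neq i}U_j)$, where \Cref{prop-Fcurvature-compute}(2) applies, it is $O(1)$ provided one verifies, for a suitable adapted frame, the three conditions
\begin{align*}
\Lambda_\omega\dd b_j^k=O(||\sigma_i||),\quad \Lambda_\omega(d\sigma_i\wedge b_j^k)=O(||\sigma_i||^2),\quad \Lambda_\omega(d\sigma_i\wedge b_k^j)=O(||\sigma_i||)\qquad(\alpha_j^i>\alpha_k^i),
\end{align*}
together with the standing hypothesis $b_j^k|_{D_i}=0$ $(\alpha_j^i>\alpha_k^i)$, for the $H_1$ and $F_j^i$ constructed above. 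So the whole content of the proof is to establish these estimates.

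To do so I would fix holomorphic coordinates $(z_1,z_2,\dots,z_n)$ on a neighbourhood $V$ of a point of $D_i$ inside $U_i\setminus(\cup_{j\neq i}U_j)$, with $D_i=\{z_1=0\}$ and with $\sigma_i$ corresponding to $z_1$ in a local trivialization of $\oo_{\mbar}(D_i)$, so that $d\sigma_i=dz_1$ and $||\sigma_i||^2=|z_1|^2h$ with $h$ smooth positive. Let $\bar V=\partial_{\bar z_1}+\sum_{m\geq2}\bar w_m\partial_{\bar z_m}$ be the antiholomorphic leaf field of the foliation of \Cref{lmm-foliation} ($w_m$ smooth), and write each $b_j^k=\sum_q(b_j^k)_{\bar q}\,d\bar z_q$. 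I then record three facts. (i) Since $\nabla_1$ is flat along each leaf $B_x$ and, being $H_1$-compatible and preserving the $F_j^i$ along $B_x$, also preserves the orthogonal splitting $\bigoplus_l Q_l^i$ along $B_x$, one may take the frame $\{e_j\}$ of \Cref{prop-Fcurvature-compute} to be $H_1$-orthonormal, adapted to $\bigoplus_l Q_l^i$, and $\nabla_1$-parallel along the leaves (obtained by parallel transport of an adapted orthonormal frame from $D_i$). Because $\db_\eb$ is metric-independent, $b$ is the $(0,1)$-part of the Chern connection of $\hp$ in this frame, and leaf-parallelism gives $b(\bar V)=0$, i.e.
\begin{align*}
(b_j^k)_{\bar1}=-\sum_{m\geq2}\bar w_m(b_j^k)_{\bar m}\qquad\text{for all }j,k.
\end{align*}
(ii) Holomorphicity of the flag $\fb_j^i=F_j^i|_{D_i}$ makes the $d\bar z_m$-components $(m\geq2)$ of $b_j^k$ vanish on $D_i$ when $\alpha_j^i>\alpha_k^i$, and then (i) makes the $d\bar z_1$-component vanish there too; this is the standing hypothesis. (iii) Since $B_x$ is $\omega$-orthogonal to $D_i$ at their intersection point, the matrix of $\omega$ is block-diagonal with respect to $\{V,\partial_{z_2},\dots,\partial_{z_n}\}$ along $D_i$, so $g^{1\bar1}=g^{V\bar V}+O(||\sigma_i||)$ and $g^{1\bar q}=g^{V\bar V}\bar w_q+O(||\sigma_i||)$ for $q\geq2$, where $g^{p\bar q}$ denote the entries of $\omega^{-1}$.

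Granting (i)--(iii), the three estimates follow by a direct computation in this frame. A short computation shows that, after discarding the $(0,2)$-parts (annihilated by $\Lambda_\omega$) and all terms that are manifestly of the stated order, each left-hand side reduces, on $D_i$, to a constant multiple of a sum $\sum_q c_{\bar q}\,g^{1\bar q}$, where $c_{\bar q}$ is $(b_k^j)_{\bar q}$ in the third estimate and is $\partial_1(b_j^k)_{\bar q}$ or $\partial_{\bar1}(b_j^k)_{\bar q}$ in the first two. By (iii) such a sum equals $g^{V\bar V}\big(c_{\bar1}+\sum_{q\geq2}\bar w_q c_{\bar q}\big)$ on $D_i$; and by (i) — differentiated where $c_{\bar q}$ is a derivative and then combined with (ii), which makes the undifferentiated terms $(b_j^k)_{\bar m}$ vanish on $D_i$ — the bracket equals $-\sum_{m\geq2}\bar w_m c_{\bar m}+\sum_{q\geq2}\bar w_q c_{\bar q}=0$ on $D_i$, giving the required bound.

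I expect the main obstacle to be precisely this cancellation: one has to arrange the foliation of \Cref{lmm-foliation} so that its leaf-tilt $w_m$ enters consistently both on the bundle side, through $b(\bar V)=0$ from flatness and parallelism of $H_1$ along the leaves, and on the base side, through the off-diagonal entries of $\omega$ along $D_i$ from the orthogonality of the leaves to $D_i$, and then keep the order-bookkeeping precise enough to see the two first-order contributions cancel on $D_i$ in every term. The higher-multiplicity case $m_i<r$ and the suppressed smooth factor $h$ add only routine extra computation.
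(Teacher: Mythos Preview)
Your proposal is correct and follows essentially the same approach as the paper's proof: both reduce to verifying the hypotheses of \Cref{prop-Fcurvature-compute}(2) on $U_i\setminus(\cup_{j\neq i}U_j)$ by exploiting (a) the holomorphicity of the flag on $D_i$ together with leaf-parallelism of the frame to get $b_j^k|_{D_i}=0$, (b) the identity $b(\bar V)\equiv0$ from parallel transport along the flat leaves (the paper writes this as $\lambda+\bar f_a'(z_1)\mu\equiv0$), and (c) the block-diagonality of $\omega$ along $D_i$ in the leaf-adapted frame coming from orthogonality of the foliation. The paper carries this out explicitly in the surface coordinates $(z_1,z_2)$ via the leaf functions $f_a(z_1)$ of \Cref{lmm-foliation}, while you phrase the same cancellation more invariantly using the leaf field $\bar V$; the computations are otherwise identical.
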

\begin{proof}
	By Lemma \ref{lem-Fbdd-local} and Proposition \ref{prop-Fcurvature-compute}, it suffices to check the assumptions made in Proposition \ref{prop-Fcurvature-compute}. For any $\alpha^i_j>\alpha^i_k$, there exists $\fb^i_s$ such that $e_j|_{D_i}\in \fb^i_s$ but $e_k|_{D_i}\notin \fb^i_s$. For any $x\in D\cap V$, $T^{0,1}_x\mbar=T^{0,1}_xD\oplus N^{0,1}_xD$, where $N_xD$ is the orthogonal complement of $T_xD$ with respect to $\omega$. Since $b_j^k=H_1(\db_\eb e_j,e_k)\in A^{0,1}(V)$, we have
	\begin{align*}
		\langle b_j^k|_D,T^{0,1}_xD\rangle&=H_1(\langle\db_\eb e_j|_D,T^{0,1}_xD\rangle,e_k)=H_1(\langle\db_{\fb^i_s} e_j|_D,T^{0,1}_xD\rangle,e_k)=0,\\
		\langle b_j^k|_D,N^{0,1}_xD\rangle&=H_1(\langle\db_\eb e_j|_D,N^{0,1}_xD\rangle,e_k)=H_1(\langle\nabla e_j|_D,N^{0,1}_xD\rangle,e_k)=0,
	\end{align*}
	where the last equality follows from $e_j$ is extended by parallel transport. So we have $b_j^k|_D=0$ for any $\alpha^i_j>\alpha^i_k$.

    For any $p\in D_i$, take a coordinate $(U,z_1,z_2)$ such that $\sigma_i=z_1$. By the construction of the foliation $\{B_x\}$ orthogonal to $D_i$ in \Cref{lmm-foliation}, there exists functions $\{f_a(z_1):a\in D_i\cap U, |z_1|<1\}$ satisfying
    \begin{itemize}
        \item $B_a=\{(z_1,z_2)\in U:z_2=f_a(z_1)\}$. In particular, $f_a(0)=a$.
        \item $f_a$ is holomorphic w.r.t $z_1$,
        \item $f_a$ is smooth w.r.t. $a$.
    \end{itemize}
    We can define a function $a(z_1,z_2):U\to\cpxn$ by taking $a|_{B_a}\equiv a$. Then $a(0,z_2)=z_2$. By implicit function theorem, we have $da|_D=-f'_a(0)dz_1+dz_2\in T^*_{1,0}$. Thus $\db a=O(||\sigma_i||)$. Since in the coordinate $(z_1,a)$, $\langle\frac{\dd}{\dd z_1},\frac{\dd}{\dd a}\rangle|_D=0$, we have
    \begin{align*}
        \omega=\ii\left(g_{1\bar1}dz_1\wedge d\bar{z}_1+g_{2\bar2}da\wedge d\bar{a}\right)+O(||\sigma_i||)
    \end{align*}
     for some smooth functions $g_{1\bar1},g_{2\bar2}$.  

    Assume $b_j^k=\lambda d\bar{z}_1+\mu d\bar{z}_2$ in the coordinate $(z_1,z_2)$, and we know that $\lambda|_D=\mu|_D=0$.
    \begin{align*}
        \Lambda_\omega\dd b_j^k|_D&=\left.g^{1\bar1}\dd b_j^k\left(\dd_1+f'_a(0)\dd_2,\overline{\dd_1+f_a'(0)\dd_2}\right)+g^{2\bar2}\dd b_j^k(\dd_2,\dd_{\bar2})\right|_D\\
        &=\left.g^{1\bar1}\left(\dd_1\lambda+f'_a(0)\dd_2\lambda+\bar{f}'_a(0)\dd_1\mu+|f'_a(0)|^2\dd_2\mu\right)+g^{2\bar2}\dd_2\mu\right|_D,
    \end{align*}
    where $\dd_i=\frac{\dd}{\dd z_i}$. By $\lambda|_D=\mu|_D=0$, we have $\dd_2\lambda|_D=\dd_2\mu|_D=0$. Since $\{e_i\}$ are constructed by parallel transport along $B_x$ w.r.t a flat connection, we have $\langle\nabla e_i,TB_x\rangle=0$. In particular, $\langle b_j^k,T^{0,1}B_x\rangle\equiv0$, i.e. $\lambda+\bar{f}'_a(z_1)\mu\equiv0$. Differentiate by $z_1$ and note that $\mu|_D=0$, we have $\dd_1(\lambda+\bar{f}'_a(0)\mu)|_D=\dd_{\bar1}(\lambda+\bar{f}'_a(0)\mu)|_D=0$. So we have
    \begin{align*}
        \Lambda_\omega\dd b_j^k|_D=\left.g^{1\bar1}\left(\dd_1\lambda+\bar{f}'_a(0)\dd_1\mu\right)\right|_D=0.
    \end{align*}

    Next, $d\sigma_i\wedge b_j^k=\lambda dz_1\wedge d\bar{z}_1+\mu dz_1\wedge d\bar{z}_2$. Assume $\dd_1+v\dd_2$ is orthogonal to $\dd_2$ for some $v\in C^\infty(U)$, then $v=f'_a(0)+O(||\sigma_i||)$. Now we have
    \begin{align*}
        \Lambda_\omega(d\sigma_i\wedge b_j^k)&=g^{1\bar1}(d\sigma_i\wedge b_j^k)\left(\dd_1+v\dd_2,\overline{\dd_1+v\dd_2}\right)+g^{2\bar2}(d\sigma_i\wedge b_j^k)(\dd_2,\dd_{\bar2})\\
        &=\lambda+\bar{v}\mu.
    \end{align*}
    Since $(\lambda+\bar{f}'_a(0)\mu)|_D=\dd_1(\lambda+\bar{f}'_a(0)\mu)|_D=\dd_{\bar1}(\lambda+\bar{f}'_a(0)\mu)|_D=0$, we have $\lambda+\bar{f}'_a(0)\mu=O(||\sigma_i||^2)$. Note $\mu=O(||\sigma_i||)$, we have
    \begin{align*}
        \Lambda_\omega(d\sigma_i\wedge b_j^k)=\lambda+\bar{v}\mu=\lambda+\bar{f}'_a(0)\mu+O(||\sigma_i||)\mu=O(||\sigma_i||^2).
    \end{align*}

    Finally, we have
    \begin{align*}
        \Lambda_\omega(d\sigma_i\wedge b_k^j)|_D=g^{1\bar1}\left.\left<\bar{b}^j_k,T^{1,0}B_x\right>\right|_D=0,
    \end{align*}
    which follows from the parallel transport along $B_x$.
\end{proof}

\begin{definition}
    A Hermitian metric $H$ on $\eb|_{M}$ is called \emph{compatible with the parabolic structure} if $H=\hp h$ for $h\in \Gamma(\End(\eb|_M))$ satisfies the following:
    \begin{enumerate}
        \item $H,\hp$ are mutually bounded, i.e. there exists $C>0$ such that for any $x\in M$, $v_x\in \eb_x$, we have $C^{-1}|v_x|_{\hp}\leqs |v_x|_{H}\leqs C|v_x|_{\hp}$.
        \item $|\db h|_{\hp,\omega}\in L^2(M,\omega)$.
         \item $|\Lambda_\omega F_H|_H\in L^1(M,\omega)$.
    \end{enumerate}
\end{definition}

\begin{definition}
    Let $\eb$ be a holomorphic vector bundle over $(M,\omega)$, a Hermitian metric $H$ is called
    \emph{Hermitian-Einstein} with respect to $\omega$ if there exists a constant $\lambda\in\rean$ such that
    \begin{align*}
        \ii\Lambda_{\omega}F_H=\lambda\cdot \id_\eb.
    \end{align*} 
\end{definition}
\begin{definition}
    Let $\eb$ be a holomorphic vector bundle. $\eb$ is called \emph{decomposable} if there exist proper holomorphic subbundles $\vb_1$, $\vb_2$ such that $\eb\simeq\vb_1\oplus\vb_2$. $\eb$ is called \emph{indecomposable} if $\eb$ is not decomposable.
\end{definition}

Now we are ready to state the main theorem of this paper.


\begin{theorem}\label{thm-KHset2}
    Let $(\mbar,\omega)$ be a compact complex surface with a Gauduchon metric. $D=\sum_{i=1}^m D_i$ be a simple normal crossing divisor.
    $\eb$ is a holomorphic vector bundle over $\mbar$ with a parabolic structure.

    If $\eb$ is parabolic stable, then there exists a Hermitian-Einstein metric on $\eb|_M$ compatible with
    the parabolic structure with respect to $\omega$.

    If $\eb|_M$ admits a Hermitian-Einstein metric compatible with the parabolic structure with respect to $\omega$ and $\eb$ is indecomposable,
    then $\eb$ is parabolic stable. 
\end{theorem}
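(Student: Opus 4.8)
The plan is to deduce Theorem~\ref{thm-KHset2} from the non-\kah Higgs-bundle correspondence of \cite{MR4237961} combined with the equivalence of parabolic stability and Simpson's analytic stability (proved in Section~3), using the initial metric $\hp$ constructed in \Cref{prop-tracecurvaturebdd} as the background metric which, crucially, has $|\Lambda_\omega F_{\hp}|_{\hp}\in L^\infty(M)\subseteq L^1(M,\omega)$ since $M$ has finite volume. Thus no conical modification of $\omega$ is needed: we work directly on the noncompact manifold $(M,\omega)$ with Higgs field $\theta=0$.

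\emph{Existence.} Suppose $\eb$ is parabolic stable. First I would verify that $(M,\omega,\hp)$ satisfies Simpson's analytic hypotheses (the ones to be recalled in Section~3): finite volume, a suitable exhaustion function with controlled Laplacian, a Sobolev/Gårding-type inequality, and the key bound $|\Lambda_\omega F_{\hp}|_{\hp}\in L^p$ for appropriate $p$ — the latter being exactly what \Cref{prop-Fcurvature-compute}(1) and \Cref{prop-tracecurvaturebdd} supply ($F_{\hp}\in L^p$ for some $p>2$ and $\Lambda_\omega F_{\hp}$ bounded). By Section~3, parabolic stability of $\eb$ is equivalent to $\hp$-analytic stability of $(\eb|_M,\db_\eb,0)$. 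Then the non-\kah Simpson correspondence \cite{MR4237961} produces a Hermitian-Einstein metric $H$ on $\eb|_M$ with $H=\hp h$, $h\in\Gamma(\End(\eb|_M))$, $h$ and $h^{-1}$ bounded, $|\db h|_{\hp,\omega}\in L^2$, and $\ii\Lambda_\omega F_H=\lambda\,\id_\eb$ where $\lambda$ is pinned down by $\pardeg_\omega\eb$ and $\vol(M,\omega)$ via the Chern-Weil identity for parabolic degree. These are precisely conditions (1)--(3) of the definition of ``compatible with the parabolic structure'' (condition (3) holds since $\Lambda_\omega F_H$ is a constant multiple of the identity and $\vol(M,\omega)<\infty$), so $H$ is the desired Hermitian-Einstein metric compatible with the parabolic structure.

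\emph{Stability from Hermitian-Einstein.} Conversely, suppose $\eb|_M$ carries a Hermitian-Einstein metric $H=\hp h$ compatible with the parabolic structure, and $\eb$ is indecomposable. Let $\vb\subsetneq\eb$ be a proper coherent subsheaf with torsion-free quotient, carrying its induced parabolic structure. I would produce the $H$-orthogonal projection $\pi$ onto $\vb|_{M'}$ over the locus $M'$ where $\vb$ is a subbundle (its complement in $M$ has complex codimension $\geq1$), show $\pi$ extends to a weakly holomorphic $L^2_1$ projection in the sense of Uhlenbeck-Yau, and run the standard Chern-Weil / subbundle inequality in the Hermitian-Einstein setting: integrating $\ii\Lambda_\omega F_H=\lambda\,\id$ against $\pi$ and using the parabolic Gauss-Codazzi formula (the weights contribute exactly the parabolic correction terms in $\pardeg_\omega\vb$, as the residues of $\db_\eb$ along $D$ match the flag filtration) gives
\begin{align*}
    \frac{\pardeg_\omega\vb}{\rank\vb}\leqs\frac{\pardeg_\omega\eb}{\rank\eb},
\end{align*}
with equality only if $\db\pi=0$ in $L^2$, i.e. $\pi$ is a genuine holomorphic (and parallel) projection, forcing a holomorphic splitting $\eb|_M\simeq\vb\oplus\vb^\perp$ which extends across $D$ (using the parabolic compatibility and Hartogs) to contradict indecomposability of $\eb$. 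Hence the inequality is strict and $\eb$ is parabolic stable. I expect the main obstacle to be the degree-convergence bookkeeping near $D$: justifying that the boundary integrals in the Chern-Weil computation for the noncompact $(M,\omega)$ converge and reproduce exactly the weighted terms $\sum_{i,j}\rank(\fb^i_j\vb/\fb^i_{j+1}\vb)\beta^i_j\deg_\omega[D_i]$, which requires careful use of the polynomial growth of $\hp$, the $L^2$-bound on $\db h$, and the residue structure — essentially porting Simpson's Chern-Weil formula to the non-\kah Gauduchon setting with the explicit $\hp$ built in Section~2.
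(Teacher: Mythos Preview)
Your proposal is correct and follows essentially the same route as the paper: verify Simpson's assumptions (A1)--(A3) and $|d\omega^{n-1}|\in L^2$ for $(M,\omega)$, use \Cref{prop-tracecurvaturebdd} to get $|\Lambda_\omega F_{\hp}|$ bounded, invoke \Cref{prop-stablityequiv} to pass from parabolic to analytic stability, and apply \Cref{thm-Zhangxi} with $\theta=0$; for the converse, use the Chern--Weil formula for subsheaves and the Hermitian--Einstein condition to get semistability, then rule out equality by the indecomposability hypothesis.

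Two points deserve sharpening. First, your ``main obstacle'' is not one: the degree bookkeeping near $D$ is exactly what \Cref{prop-subsheaf-an=par} already does --- for any $H$ compatible with the parabolic structure one has $d(\vb,H)=\pardeg_\omega\vb$, so the Chern--Weil inequality $d(\vb,H)\leqs\tfrac{\rank\vb}{\rank\eb}\,d(\eb,H)$ is immediately the parabolic slope inequality, with no extra boundary analysis required at this stage. Second, invoking ``Hartogs'' to extend the holomorphic projection across $D$ is not right, since $D$ has codimension one. The paper's argument is a Riemann--type removable singularity: in an $\hp$-orthonormal frame $\tilde e_j=\prod_i\|\sigma_i\|^{-\alpha^i_j}e_j$ the matrix entries $a^k_j$ of $\pi_V$ are bounded (because $H$ and $\hp$ are mutually bounded), hence in the smooth frame $\{e_j\}$ the entries are $\prod_i\|\sigma_i\|^{\alpha^i_j-\alpha^i_k}a^k_j$ with exponent $>-1$, so $\pi_V$ is holomorphic on $M$ with sub-pole growth along $D$ and therefore extends holomorphically over $\mbar$. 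This is where the weight inequalities $\alpha^i_j-\alpha^i_k>-1$ (all weights in $[0,1)$) are actually used.
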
 


\section{Analytic stabilty}
The proof of similar results in \cite{MR1701135,MR1775134,MR2310103} are all based on Simpson's work\cite{MR944577} on existence of Hermitian-Einstein metric of Higgs bundle over noncompact manifolds. More precisely, Simpson considered the noncompact \kah manifolds with the following three additional assumptions (A1)-(A3):
\begin{enumerate}
    \item[(A1).] $(M,g)$ has finite volume.
    \item[(A2).] There exists a non-negative exhaustion function $\phi$ with $\ii\Lambda_\omega\dd\db\phi$ bounded.
    \item[(A3).] There exists an increasing function $a:[0,\infty)\to[0,\infty)$ with $a(0)=0$ and $a(x)=x$ for $x>1$, such that if $f$ is a bounded positive function on $M$ with $\ii\Lambda_\omega\dd\db f\geqs-B$, then
    \begin{align*}
        \sup_M|f|\leqs C_B a\left(\int_M|f|\frac{\omega^n}{n!}\right).
    \end{align*}
    Futhermore, if $B=0$, then $\ii\Lambda_\omega\dd\db f=0$.
\end{enumerate}

For similar reason, our proof is based on the following theorem, which is a non\kah generalization of Simpson's work.
\begin{theorem}[\cite{MR4237961},Theorem 1.1]\label{thm-Zhangxi} Let $(M,g)$ be a noncompact Gauduchon manifold satisfying the following Assumptions 1,2,3 and $|d\omega^{n-1}|_g\in L^2(M)$, $(E,\db_E,\theta)$ be a Higgs bundle with a Hermitian metric $K$ satisfying $\sup_M|\Lambda_\omega F_{K,\theta}|_K<+\infty$ over $M$. If $(E,\db_E,\theta)$ is $K$-analytically stable (see \Cref{def-analyticstable}), then there exists a Hermitian metric $H$ with $\db_\theta(\log K\iv H)\in L^2(M)$, $H$ and $K$ mutually bounded, such that
\begin{align*}
    \ii\Lambda_\omega(F_H+[\theta,\theta^{*H}])=\lambda_{K,\omega}\cdot \mathrm{Id}_E.
\end{align*}
    
\end{theorem}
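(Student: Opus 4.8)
We use the Hermitian-Yang-Mills heat flow and extract a limit. The plan is to solve
\begin{align*}
    H\iv\pdts H=-2\left(\ii\Lambda_\omega(F_H+[\theta,\theta^{*H}])-\lambda_{K,\omega}\,\id_E\right),\qquad H(0)=K,
\end{align*}
writing $h(t):=K\iv H(t)$, which is positive and self-adjoint with respect to $K$, and to show that $H(t)$ converges as $t\to\infty$ to a solution of the Hermitian-Einstein equation. First I would establish short-time existence by standard parabolic theory, working on an exhaustion of $M$ by relatively compact domains $M_j$ (provided by the exhaustion function $\phi$ of Assumption 2) with Dirichlet boundary condition $H=K$ on $\dd M_j$, and then passing to the limit $j\to\infty$. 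The tension field $e(t):=\ii\Lambda_\omega(F_{H}+[\theta,\theta^{*H}])-\lambda_{K,\omega}\,\id_E$ satisfies a heat-type equation, so a maximum-principle argument together with the hypothesis $\sup_M|\Lambda_\omega F_{K,\theta}|_K<+\infty$ controls $|e(t)|$ and yields long-time existence; here the Gauduchon condition $\dd\db\omega^{n-1}=0$ and the integrability $|d\omega^{n-1}|_g\in L^2(M)$ are precisely what is needed so that the Bochner-type computation producing this heat equation closes up despite $d\omega\neq0$.

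The heart of the argument is the uniform $C^0$ estimate on $s(t):=\log h(t)$, and this is where analytic stability enters. I would introduce the Donaldson functional $M(K,H)$, whose derivative along the flow equals $-\int_M|e(t)|^2\,\frac{\omega^n}{n!}$ and which is hence monotone decreasing. Assumption 3 (the Moser-type inequality bounding $\sup_M|f|$ by $a(\int_M|f|)$ for $f$ with $\ii\Lambda_\omega\dd\db f$ bounded below), applied to the largest eigenvalue of $s$, lets one control $\sup_M|s|$ by $\int_M|s|\,\frac{\omega^n}{n!}$, while $K$-analytic stability is used to show that $M(K,H)$ is coercive, i.e. that it bounds $\int_M|s|$ from above along the flow. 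If instead $\sup_M|s(t)|$ were to blow up as $t\to\infty$, I would rescale $s(t)$ and extract a weak limit; by the Uhlenbeck-Yau trick this limit defines a nontrivial $L^2_1$ weakly holomorphic, $\theta$-invariant projection, hence by their regularity theorem a proper saturated coherent subsheaf $\vb\subset E$, and a Chern-Weil computation of $\deg(\vb,K)$ through this projection would contradict $K$-analytic stability.

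Once the uniform $C^0$ bound is in hand, I would bootstrap: interior elliptic/parabolic (Schauder) estimates, using that $\sup_M|e(t)|$ is already controlled, give uniform higher-order bounds on compact subsets, so a subsequence $H(t_k)$ converges in $C^\infty_{loc}$ to a limit $H_\infty$ satisfying $\ii\Lambda_\omega(F_{H_\infty}+[\theta,\theta^{*H_\infty}])=\lambda_{K,\omega}\,\id_E$. The mutual boundedness of $H_\infty$ and $K$ is immediate from the uniform $C^0$ bound on $s$, and the membership $\db_\theta(\log K\iv H_\infty)\in L^2(M)$ follows from the energy estimate furnished by the Donaldson functional (the $L^2$-norm of $\db_\theta s$ being dominated by the total drop of $M(K,H)$ along the flow).

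The main obstacle is the $C^0$ estimate of the second paragraph. Making the Uhlenbeck-Yau extraction rigorous in the noncompact non\kah setting requires careful control of every boundary term and every $d\omega$-error term in the integration-by-parts identities that relate $\deg(\vb,K)$ to $\int_M\tr(\pi\,\ii\Lambda_\omega F_K)$ and $\int_M|\db\pi|^2$; for these the finite-volume hypothesis (Assumption 1), the exhaustion function with $\ii\Lambda_\omega\dd\db\phi$ bounded (Assumption 2), and $|d\omega^{n-1}|_g\in L^2(M)$ are all indispensable, and verifying that the rescaled limit is genuinely $\theta$-invariant and of subbundle type away from a lower-dimensional set is the most delicate point.
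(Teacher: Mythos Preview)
The paper does not prove this theorem at all: it is quoted verbatim from \cite{MR4237961} and used as a black box in the proof of \Cref{thm-KHset2}. There is therefore no ``paper's own proof'' to compare your proposal against. Your outline is a faithful sketch of the Simpson-type heat-flow argument that \cite{MR4237961} carries out (and that \cite{MR944577} pioneered in the \kah case): solve the flow on an exhaustion, get long-time existence from the maximum principle applied to the tension, and derive the $C^0$ bound by combining the Donaldson functional with the Uhlenbeck--Yau blow-up alternative, where analytic stability rules out the destabilizing weakly holomorphic subsheaf. That is the correct strategy, and the points you flag as delicate (controlling the $d\omega$-error terms via $|d\omega^{n-1}|_g\in L^2$ and the Gauduchon condition, and the regularity of the limiting projection) are exactly the places where the non\kah noncompact setting demands extra work. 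But none of this appears in the present paper; if you want to check your sketch against an actual proof you must consult \cite{MR4237961} directly.
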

Since the metric $\omega$ in our case is just the restriction metric $\omega|_M$, the extra assumption $|d\omega^{n-1}|_g\in L^2(M)$ in non\kah case obviously holds. For the use of the theorem above, we need to give the definition of analytic degree and show the equivalence of parabolic stablity and analytic stability of metrics compatible with the parabolic structure.


\begin{definition}
    Suppose $H$ is a Hermitian metric on $\eb$ over $M$. The \emph{analytic degree} $d(\eb,H)$ of $(\eb, H)$ with respect to $\omega$ is defined by
    \begin{align*}
        d(\eb,H):=\frac{\ii}{2\pi}\int_M\tr F_H\wedge\omega.
    \end{align*}
\end{definition}
\begin{remark}
    For the Hermitian metric on $\eb$ over $\mbar$, the analytic degree is exactly the classical degree defined before.
\end{remark}
\begin{proposition}\label{prop-e-an=par}
    For any Hermitian metric $H$ compatible with the parabolic structure, we have $$d(\eb,H)=\pardeg_{\omega}\eb.$$
\end{proposition}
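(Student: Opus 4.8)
The plan is to compute the analytic degree $d(\eb,H)$ by relating the curvature integral over $M$ to a curvature integral over $\mbar$ plus boundary contributions coming from the divisor, and then identify those boundary contributions with the weighted terms $\sum_{i,j}\rank(\fb^i_j/\fb^i_{j+1})\alpha^i_j\deg_\omega[D_i]$ appearing in $\pardeg_\omega\eb$. First I would reduce to the case $H=\hp$: indeed if $H=\hp h$ with $h$ satisfying the compatibility conditions, then $\tr F_H-\tr F_{\hp}=\db\dd\log\det h$, so that $\tr F_H\wedge\omega-\tr F_{\hp}\wedge\omega$ is $\db\dd$-exact; since $\log\det h$ is bounded (mutual boundedness) and $\db\log\det h\in L^2$, a Gauduchon-type integration by parts (using $\dd\db\omega^{n-1}=0$ for $n=2$, i.e.\ $\dd\db\omega=0$) shows $\int_M(\tr F_H-\tr F_{\hp})\wedge\omega=0$. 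This step needs a cutoff argument near $D$ together with the $L^2$ and $L^1$ hypotheses in the definition of "compatible"; I expect this to be routine but to require care with the error terms.

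Next I would compute $d(\eb,\hp)$ directly. Working in a tubular neighbourhood $U_i$ and using the orthogonal decomposition $\eb|_{U_i}=\bigoplus_j Q^i_j$ with $\hp|_{U_i}=\bigoplus_j\prod_s\|\sigma_s\|^{2\alpha^s_j}H_1|_{Q^i_j}$, one gets, up to smooth $\db\dd$-exact terms and off-diagonal pieces that contribute nothing to the trace,
\begin{align*}
	\tr F_{\hp}=\tr F_{H_1}+\sum_i\Big(\sum_j\rank Q^i_j\,\alpha^i_j\Big)\db\dd\log\|\sigma_i\|^2+(\text{terms supported away from }D).
\end{align*}
The key analytic input is that $\frac{\ii}{2\pi}\db\dd\log\|\sigma_i\|^2$, integrated against $\omega$ over $M$, should recover $\deg_\omega[D_i]$; this is the Gauduchon analogue of the Poincaré–Lelong formula. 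Concretely, $\frac{\ii}{2\pi}\dd\db\log\|\sigma_i\|^2_{h_i}$ differs from the curvature of $([D_i],h_i)$ by $[D_i]$ as a current, and since $\sigma_i$ is a holomorphic section vanishing on $D_i$, integrating the smooth form $\tr F_{H_1}+\sum_i(\ldots)\,c_1^{BC}([D_i])$-representative over $\mbar$ against $\omega$ gives exactly $\deg_\omega\eb+\sum_i(\sum_j\rank Q^i_j\,\alpha^i_j)\deg_\omega[D_i]$. Since $\rank Q^i_j=\rank(\fb^i_j/\fb^i_{j+1})$ by construction, this is precisely $\pardeg_\omega\eb$.

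The main obstacle is making the boundary/current computation rigorous in the non-Kähler setting: one cannot simply invoke Stokes on $\mbar$ because $\log\|\sigma_i\|^2$ blows up along $D_i$, so I would instead integrate over $M_\epsilon:=\mbar\setminus\bigcup_i\{\|\sigma_i\|<\epsilon\}$, apply Stokes, and carefully track the boundary integrals over the tubes $\{\|\sigma_i\|=\epsilon\}$ as $\epsilon\to0$. The Gauduchon condition $\dd\db\omega=0$ (in dimension $2$) is what allows the relevant boundary terms to be reorganized so that the limit exists and equals the asserted residue; in the smooth-divisor higher-dimensional case of Theorem~1.2 one uses $\dd\db\omega^{n-1}=0$ in the same way. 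A secondary point is verifying that the off-diagonal connection forms $b^i_j{}^k$ and the factors $f_j$ contribute only $\db\dd$-exact or $L^1$-negligible terms to $\tr F_{\hp}\wedge\omega$; this follows from \Cref{prop-Fcurvature-compute} and the estimates established in \Cref{prop-tracecurvaturebdd}, since those terms are $O(\|\sigma_i\|^{-\gamma})$ with $\gamma<1$ and hence integrable, while their "singular parts" are traceless.
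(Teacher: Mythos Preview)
Your proposal is correct and follows essentially the same route as the paper: first reduce from $H$ to $\hp$ by showing $\int_M\db\dd\log\det h\wedge\omega=0$ via integration by parts (the paper invokes \cite[Lemma 5.2]{MR944577} for this, using exactly the $L^\infty$, $L^2$, $L^1$ conditions in the definition of compatibility), then compute $d(\eb,\hp)$ by comparing $\hp$ to a smooth metric on $\mbar$ and applying the Poincar\'{e}--Lelong formula to the $\|\sigma_i\|^{2\alpha^i_j}$ factors.

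Two small remarks. First, the paper packages the computation of $d(\eb,\hp)$ slightly more cleanly than your local decomposition: it simply writes $\tr F_{\hp}=\tr F_{K_1}+\db\dd\log\det(K_1^{-1}\hp)$ for a fixed smooth metric $K_1$, observes that $\det(K_1^{-1}\hp)=\big(\prod_{i,j}\|\sigma_i\|^{2k^i_j\alpha^i_j}\big)\cdot f$ with $f$ smooth across $D$, and then disposes of the $f$ term by the Gauduchon condition and the $\|\sigma_i\|$ terms by Poincar\'{e}--Lelong (\Cref{prop-poincare-lelong}); no tube argument over $\{\|\sigma_i\|=\epsilon\}$ is needed because $\db\dd\log\|\sigma_i\|^2$ is smooth on $M$. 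Second, your final paragraph about off-diagonal connection forms $b^k_j$ is unnecessary here: since $\tr F_{\hp}=-\db\dd\log\det\hp$ depends only on the determinant, off-diagonal pieces never enter the trace computation, and the estimates in \Cref{prop-Fcurvature-compute} and \Cref{prop-tracecurvaturebdd} are not used in this proposition.
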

\begin{proof}
    Let $K_1$ be a smooth Hermitian metric on $\eb$ over $\mbar$, and $K_1\iv H_{par}=h\in \End(\eb|_M)$. Recall that on the tubular neighborhood $U_i$ of $D_i$, we have
    \begin{align*}
        \hp|_{U_i\backslash D}=\bigoplus_j\left(\prod_i||\sigma_i||^{2\alpha^i_j}\right)H_1|_{Q^i_j},
    \end{align*}
    where $H_1$ is smooth on $U_i$. Thus
    \begin{align*}
        \det h|_{U_i\backslash D}=\det K_1\iv\hp|_{U_i\backslash D}=\left(\prod_{i,j}||\sigma_i||^{2k^i_j\alpha^i_j}\right)\det(K_1\iv H_1),
    \end{align*}
    where $k^i_j=\rank(\fb^i_j/\fb^i_{j+1})$.
    So $f:=\left(\prod_{i,j}||\sigma_i||^{-2k^i_j\alpha^i_j}\right)\det h\in C^\infty(M)$ can be extended smoothly to $C^\infty(\mbar)$.

    First we show that $d(\eb,H_{par})=\pardeg_{\omega}\eb$.
    \begin{align*}
        d(\eb,H_{par})&=\frac{\ii}{2\pi}\int_M\tr F_{\hp}\wedge\omega\\
        &=\frac{\ii}{2\pi}\int_M\tr F_{K_1}\wedge\omega+\frac{\ii}{2\pi}\int_M\db\dd\log\det h\wedge\omega\\
        &=\deg_\omega\eb+\frac{\ii}{2\pi}\int_M\db\dd\log\det h\wedge\omega\\
        &=\deg_\omega\eb+\frac{\ii}{2\pi}\int_M\db\dd\log\left(\prod_{i,j}||\sigma_i||^{2k^i_j\alpha^i_j}\right)\wedge\omega+\frac{\ii}{2\pi}\int_M\db\dd\log f\wedge\omega,
    \end{align*}
    for some $f\in C^\infty(\mbar,\rean_+)$ by the previous claim. Since $\dd\db\omega=0$, we have
    \begin{align*}
        \int_M\db\dd\log f\wedge\omega=\int_{\mbar}\db\dd\log f\wedge\omega=\int_{\mbar}\log f \dd\db\omega=0.
    \end{align*}
    By the Poincare-Lelong formula (Proposition \ref{prop-poincare-lelong}), 
    \begin{align*}
        \frac{\ii}{2\pi}\db\dd\log||\sigma_i||^2=c_1^{BC}([D_i])-\delta_{D_i},
    \end{align*}
    we have
    \begin{align*}
        \frac{\ii}{2\pi}\int_M\db\dd\log\left(\prod_{i,j}||\sigma_i||^{2k^i_j\alpha^i_j}\right)\wedge\omega&=\frac{\ii}{2\pi}\sum_{i,j}k^i_j\alpha^i_j\int_M\db\dd\log||\sigma_i||^2\wedge\omega\\
        &=\sum_{i,j}\rank(\fb^i_j/\fb^i_{j+1})\alpha^i_j\deg_\omega[D_i].
    \end{align*}
    Thus we prove the proposition for $H=\hp$.


    Next we show that $d(\eb,H)=\pardeg_{\omega}\eb$ for general metric $H$ compatible with the parabolic structure. Set $H=\hp h$ for $h\in \Gamma(\End(\eb|_M))$. We have $\tr F_H=\tr F_{\hp}-\dd\db\log\det h$.
    \begin{align*}
        d(\eb,H)&=\frac{\ii}{2\pi}\int_M\tr F_{H}\wedge\omega\\
        &=d(\eb,\hp)-\frac{\ii}{2\pi}\int_M \dd\db\log\det h\wedge\omega.
    \end{align*}
    By the compatible condition, we have
    \begin{align*}
        &|\db\log\det h|=|\tr(\db h h\iv)|\leqs C|\db h|_{\hp,\omega}\in L^2(M,\omega),\\
        &\Lambda_\omega\dd\db\log\det h=\tr \Lambda_\omega F_{\hp}-\tr \Lambda_\omega F_H\in L^1(M,\omega),\\
        &\log\det h\in L^2(M,\omega).
    \end{align*}
    So by \cite[Lemma 5.2]{MR944577}, we have
    \begin{align*}
        \int_M \dd\db\log\det h\wedge\omega=\int_M \db\log\det h\wedge\dd\omega=-\int_M \log\det h\cdot \db\dd\omega=0.
    \end{align*}
\end{proof}

\begin{definition}\label{def-analyticstable}
    Let $\vb$ be a proper coherent subsheaf of $\eb|_M$ with torsion free quotient, $H$ be a Hermitian metric on $\eb$ over $M$. The \emph{analytic degree} $d(\vb,H)$ of $(\vb,H)$ with respect to $\omega$ is defined by
    \begin{align*}
        d(\vb,H):=\frac{\ii}{2\pi}\int_{M}\tr(F_{H|_\vb})\wedge \omega.
    \end{align*} 
    We call $(\eb,H)$ is \emph{analytic stable} with respect to $\omega$ if for any proper coherent subsheaf $\vb$ of $\eb|_M$ with torsion free quotient, we always have
    \begin{align*}
        \frac{d(\vb,H)}{\rank\vb}<\frac{d(\eb,H)}{\rank\eb}.
    \end{align*}
\end{definition}
\begin{remark}
    Since $\vb$ may be not a subbundle of $\eb$, $H|_\vb$ is not always a Hermitian metric on $\vb$. So it's not obvious that the analytic degree is equal to the classical degree for the metric $H$ on $\eb$ over $\mbar$.  However, we can still prove the following proposition by blowing-up the singularities.
\end{remark}
\begin{proposition}\label{prop-analyticdegree-subsheaf-classical}
    Suppose $\vb$ is a proper subsheaf with torsion free quotient. Let $K_1$ be a Hermitian metric on $\eb$ over $\mbar$. Then we have
    \begin{align*}
        d(\vb,K_1)=\deg_\omega \vb.
    \end{align*}
\end{proposition}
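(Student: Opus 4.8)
The plan is to reduce the computation of $d(\vb,K_1)$ to the classical degree of $\vb$ via a resolution of singularities, so that the analytic integral over $M$ becomes a genuine Chern-class integral over a smooth model. First I would recall that a coherent subsheaf $\vb\subseteq\eb|_M$ with torsion-free quotient is reflexive away from a subvariety $S$ of codimension $\geqslant2$; since $\dim_{\cpxn}\mbar=2$, this $S$ is a finite set of points, and off $S$ the sheaf $\vb$ is actually a subbundle of $\eb|_M$. Let $\pi\colon\widetilde{\mbar}\to\mbar$ be a composition of blow-ups at the points of $S$ (and at any extra points needed on $D$) so that the strict transform $\widetilde{\vb}$ of $\pi^*\vb/(\text{torsion})$ is locally free on all of $\widetilde{M}:=\pi\iv(M)$, and so that $\pi$ is an isomorphism over $\mbar\setminus S$. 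On $\widetilde{M}$ the metric $\pi^*K_1|_{\widetilde{\vb}}$ is a genuine smooth Hermitian metric, so
\begin{align*}
	\frac{\ii}{2\pi}\int_{\widetilde{M}}\tr F_{\pi^*K_1|_{\widetilde{\vb}}}\wedge\pi^*\omega=\frac{\ii}{2\pi}\int_{\widetilde{\mbar}}\tr F_{\pi^*K_1|_{\widetilde{\vb}}}\wedge\pi^*\omega=\deg_{\pi^*\omega}\widetilde{\vb},
\end{align*}
where the first equality holds because $\widetilde{M}$ and $\widetilde{\mbar}$ differ by the exceptional divisor, a measure-zero set, and the integrand is a smooth $(2,2)$-form on all of $\widetilde{\mbar}$.

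Next I would match the two sides of this identity to the quantities on $\mbar$. On the left, since $\pi$ restricts to a biholomorphism $\widetilde{M}\setminus\pi\iv(S)\xrightarrow{\sim}M\setminus S$ and both $\vb$ and $\widetilde{\vb}$ agree with the honest subbundle there, the integrand pulls back exactly; the only issue is integrability near $\pi\iv(S)$, which is handled because $F_{\pi^*K_1|_{\widetilde{\vb}}}$ is a smooth (hence bounded) form on the compact $\widetilde{\mbar}$ while $\pi^*\omega$ has finite mass, and correspondingly $\tr(F_{K_1|_\vb})$ is $L^1$ near the finitely many points of $S$ — this is the standard fact that the second fundamental form of a reflexive subsheaf is $L^2$ near a codimension-$2$ singular locus (Uhlenbeck–Yau). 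Hence the left side equals $d(\vb,K_1)$ as defined in \Cref{def-analyticstable}. On the right, $\deg_{\pi^*\omega}\widetilde{\vb}=\frac{\ii}{2\pi}\int_{\widetilde{\mbar}}c_1^{BC}(\det\widetilde{\vb})\wedge\pi^*\omega$, and one checks $\det\widetilde{\vb}=\pi^*(\det\vb)\otimes\oo(-\sum a_j E_j)$ for the exceptional divisors $E_j$ with some integers $a_j$; since $\int_{\widetilde{\mbar}}[E_j]\wedge\pi^*\omega=\int_{E_j}\pi^*\omega=0$ (the fibers of $\pi$ over points are contracted by $\pi^*\omega$, which is a pullback), these correction terms drop, leaving $\deg_{\pi^*\omega}\widetilde{\vb}=\frac{\ii}{2\pi}\int_{\widetilde{\mbar}}\pi^*c_1^{BC}(\det\vb)\wedge\pi^*\omega=\deg_\omega(\det\vb)=\deg_\omega\vb$ by the projection formula. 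Chaining the equalities gives $d(\vb,K_1)=\deg_\omega\vb$.

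The main obstacle I expect is making the reflexivity/blow-up step rigorous: one must verify that after a suitable finite sequence of blow-ups the saturation of $\pi^*\vb$ inside $\pi^*\eb$ (modulo torsion) really does become locally free on the whole of $\widetilde{M}$, and that $K_1|_\vb$ and $\pi^*K_1|_{\widetilde{\vb}}$ pull back to one another on the dense open set where both make sense — the subtlety being that $\vb$ need not be saturated in $\eb|_M$, only that $\eb|_M/\vb$ is torsion-free, so one should first pass to the saturation (which does not change the quotient's torsion-freeness, hence not the subsheaf, away from codimension $2$) before resolving. Once locally-free-ness on $\widetilde{M}$ is in hand, the rest is the familiar ``integral of a pullback against an exceptional class vanishes'' bookkeeping together with the $L^1$-integrability of $\tr F_{K_1|_\vb}$ near the finite singular set, which follows from the Uhlenbeck–Yau estimate on the second fundamental form; I would cite \cite{MR944577} or the treatment in \cite{MR915839} for that estimate rather than reproving it.
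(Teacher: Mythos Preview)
Your approach is essentially the paper's: resolve the singularities of the inclusion $\vb\hookrightarrow\eb$ by blowing up, compute on the smooth model, and kill the exceptional-divisor contributions. The paper streamlines this by first reducing to $\rank\vb=1$ via $\wedge^s$; on a surface a rank-one reflexive sheaf is already a line bundle on all of $\mbar$ (\Cref{prop-sheafregularity}), so one can take a global smooth metric $\gamma_0$ on $\vb$ and write $\deg_\omega\vb=\frac{\ii}{2\pi}\int_{\mbar}F_{\gamma_0}\wedge\omega$ directly. Only the embedding $\vb\hookrightarrow\eb$ fails to be a subbundle at finitely many points; after a blow-up one finds $\pi^*\vb$ is an honest line subbundle of $\pi^*\eb\otimes\oo(-mP_0)$ and compares $\pi^*\gamma_0$ with $(\pi^*K_1\otimes h)|_{\pi^*\vb}$. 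Your route through the saturation $\widetilde{\vb}$ and the decomposition $\det\widetilde{\vb}=\pi^*(\det\vb)\otimes\oo(-\sum a_jE_j)$ is equivalent but carries more bookkeeping.

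There is one genuine gap. Your step $\int_{\widetilde{\mbar}}[E_j]\wedge\pi^*\omega=\int_{E_j}\pi^*\omega=0$ silently identifies $c_1^{BC}(\oo(E_j))$ with the current of integration $[E_j]$ when paired against $\pi^*\omega$. In the \kah case this is Poincar\'e duality, but when $\omega$ is only Gauduchon the two differ by a $\dd\db$-exact current, and pairing that against a non-closed $(1,1)$-form is not automatically zero; the usual projection formula (e.g.\ \cite{MR749574}) assumes closed forms. The paper isolates exactly this as \Cref{prop-projection}: one chooses a metric on $\oo(E_j)$ that is flat outside a small neighborhood $\tilde U$ of $E_j$, so the curvature is supported in $\tilde U$, and then uses that $\pi^*\omega$ is locally $\dd$-exact on $\tilde U$ (since $\dd\db\omega=0$) to integrate by parts. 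You should either invoke this or supply the argument; the phrase ``projection formula'' alone does not cover the non-\kah case.
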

\begin{proof}
     The subsheaf $\vb\subset\eb$ gives $\wedge^s\vb\subset\wedge^s\eb$, where $s$ is the rank of $\vb$. Since $\deg_\omega\vb=\deg_\omega(\wedge^s\vb)$ and $d(\vb,K_1)=d(\wedge^s\vb,K_1)$, we can assume $\vb$ is of rank $1$. By Proposition \ref{prop-sheafregularity}, $\vb$ is locally free outside a subset of codim$\geqs3$. Since $\mbar$ is a complex surface, $\vb$ is actually a holomorphic vector bundle. The quotient sheaf $\eb/\vb$ is locally free outside a subset of codim$\geqs2$, i.e. a set of isolated points. For each singular point $x_0$, we can blow up $x_0$ to get $\pi:\widetilde{M}:=\mathrm{Bl}_{x_0}\mbar\to\mbar$ and the exceptional divisor $P_0:=\pi\iv(x_0)$. The exact sequence $0\to\oo_{\mbar}\to\eb\otimes\vb^\vee\to(\eb/\vb)\otimes \vb^\vee\to0$ naturally induces a long exact sequence
    \begin{align*}
        0 \to H^0(\mbar,\oo_{\mbar}) \xrightarrow{i_*} H^0(\mbar,\eb\otimes\vb^\vee).
    \end{align*}
    Since $H^0(\mbar,\oo_{\mbar})\simeq\cpxn$, we get a global section $s:=i_*(1)\in H^0(\mbar,\eb\otimes\vb^\vee)$. In fact, the set of singular points $\{x_0\}$ is exactly the vanishing set of $s$. Take $\tilde{s}:=\pi^*(s)\in H^0(\widetilde{M},\pi^*(\eb)\otimes\pi^*(\vb)^\vee)$. Let $\tilde{s}=(s_1,s_2,\cdots,s_r)$ near $P_0$ with respect to a trivialization of $\pi^*(\eb)\otimes\pi^*(\vb)^\vee$. Suppose $m=\min\{\ord_{P_0}s_i:i=1,\cdots, r\}>0$ to be minimum of the vanishing order, then $\tilde{s}\in H^0(\widetilde{M},\pi^*(\eb)\otimes\pi^*(\vb)^\vee\otimes\oo_{\widetilde{M}}(-m P_0))$ is a nowhere vanishing global section. Hence $\pi^*(\vb)$ is a line subbundle of $\pi^*(\eb)\otimes\oo_{\widetilde{M}}(-m P_0)$.

    Let $\rho\in H^0(\widetilde{M},\oo_{\widetilde{M}}(P_0))$ be the canonical section and $h$ be a Hermitian metric on $\oo_{\widetilde{M}}(P_0)$. Then $\gamma:=(\pi^*(K_1)\otimes h)|_{\pi^*\vb}$ is a Hermitian metric on $\pi^*\vb$. Let $\gamma_0$ be a Hermitian metric on $\vb$, then by definition
    \begin{align*}
        \deg_\omega\vb&=\frac{\ii}{2\pi}\int_{\mbar}\tr(F_{\gamma_0})\wedge \omega\\
        &=\frac{\ii}{2\pi}\int_{\mbar\backslash\{x_0\}}\tr(F_{\gamma_0})\wedge \omega\\
        &=\frac{\ii}{2\pi}\int_{\widetilde{M}\backslash P_0}\tr(F_{\pi^*\gamma_0})\wedge \pi^*\omega\\
        &=\frac{\ii}{2\pi}\int_{\widetilde{M}\backslash P_0}\tr(F_{\gamma})\wedge \pi^*\omega.
    \end{align*}
    The last equality follows from the fact that $\pi^*\gamma_0$ and $\gamma$ are both Hermitian metrics on $\pi^*\vb|_{\widetilde{M}\backslash P_0}$, so they differs a $\dd\db$ exact form.
    \begin{align*}
        \deg_\omega\vb&=\frac{\ii}{2\pi}\int_{\widetilde{M}\backslash P_0}\tr(F_{\gamma})\wedge \pi^*\omega\\
        &=\frac{\ii}{2\pi}\int_{\widetilde{M}\backslash P_0}\tr(\pi^*F_{K_1|_\vb})\wedge \pi^*\omega+\frac{\ii}{2\pi}\int_{\widetilde{M}\backslash P_0}\left(-m \db\dd\log||\rho||_h\right)\wedge \pi^*\omega\\
        &=\frac{\ii}{2\pi}\int_{M\backslash \{x_0\}}\tr(F_{K_1|_\vb})\wedge \omega-\frac{m}{2}\int_{\widetilde{M}\backslash P_0}c_1^{BC}(\oo_{\tilde{M}}(P_0))\wedge \pi^*\omega\\
        &=\frac{\ii}{2\pi}\int_{M}\tr(F_{K_1|_\vb})\wedge \omega,
    \end{align*}
    where the last two inequalities follow from the Poincar\'{e}-Lelong formula (\Cref{prop-poincare-lelong}) and \Cref{prop-projection}.
\end{proof}
\begin{proposition}\label{prop-subsheaf-andeg=pardeg}
    Suppose $\eb$ is holomorphic vector bundle over $\mbar$, and $\vb$ is a coherent subsheaf of $\eb$ with torsion free quotient. For the Hermitian metric $H_{par}$ constructed in previous section, we have
    \begin{align*}
        d(\vb,H_{par})=\pardeg_\omega\vb.
    \end{align*}
\end{proposition}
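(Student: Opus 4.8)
The plan is to reduce the computation of $d(\vb,H_{par})$ to the two facts already at our disposal: \Cref{prop-analyticdegree-subsheaf-classical}, which identifies $d(\vb,K_1)=\deg_\omega\vb$ for a smooth metric $K_1$ on $\eb$ over $\mbar$, and the flag/weight bookkeeping used in the proof of \Cref{prop-e-an=par}. First I would write $H_{par}|_\vb = (K_1|_\vb)\,h_\vb$ for $h_\vb\in\Gamma(\End(\vb|_M))$, so that $\tr F_{H_{par}|_\vb}=\tr F_{K_1|_\vb}-\dd\db\log\det h_\vb$; integrating against $\omega$ and using \Cref{prop-analyticdegree-subsheaf-classical}, the problem becomes computing $\tfrac{\ii}{2\pi}\int_M \db\dd\log\det h_\vb\wedge\omega$ and showing it equals the sum $\sum_{i,j}\rank(\fb^i_j\vb/\fb^i_{j+1}\vb)\,\beta^i_j\deg_\omega[D_i]$ appearing in $\pardeg_\omega\vb$.

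The key step is therefore a local analysis of $\det h_\vb$ near $D$. On the tubular neighborhood $U_i$ we decomposed $(\eb|_{U_i},H_1)=\bigoplus_j (Q^i_j, H_1|_{Q^i_j})$ with $H_{par}$ scaling the $j$-th block by $\prod_l \|\sigma_l\|^{2\alpha^l_j}$. For a subsheaf $\vb$ the induced filtration by $F^i_j\vb := F^i_j\cap\vb$ need not split smoothly, but over $M=\mbar\setminus D$ (where $\vb$ is honestly a subbundle away from a codimension-$\geq 2$ set, by \Cref{prop-sheafregularity} and the surface hypothesis, exactly as in the previous proposition) one can still compare $H_{par}|_\vb$ with the model metric $K_1|_\vb$ scaled blockwise by the weights $\beta^i_j=\max\{\alpha^i_k : \fb^i_j\vb\subseteq\fb^i_k\cap\vb\}$. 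The upshot I expect is
\begin{align*}
    \det h_\vb|_{U_i\setminus D} = \Bigl(\prod_{i,j}\|\sigma_i\|^{2\,\rank(\fb^i_j\vb/\fb^i_{j+1}\vb)\,\beta^i_j}\Bigr)\cdot g_i,
\end{align*}
with $\log g_i$ extending across $D_i$ to an $L^1_{loc}$ (in fact bounded, away from the codimension-$2$ locus) function whose $\dd\db$ contributes nothing after pairing with $\omega$ and using $\dd\db\omega=0$, exactly as the term $\log f$ was handled in \Cref{prop-e-an=par}. Granting this, the Poincar\'e--Lelong formula (\Cref{prop-poincare-lelong}) applied to each $\|\sigma_i\|^2$ converts the remaining term into $\sum_{i,j}\rank(\fb^i_j\vb/\fb^i_{j+1}\vb)\,\beta^i_j\deg_\omega[D_i]$, and adding $\deg_\omega\vb$ from \Cref{prop-analyticdegree-subsheaf-classical} gives $\pardeg_\omega\vb$.

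The main obstacle is precisely justifying that $\log g_i$ (equivalently, the discrepancy between the true $\det h_\vb$ and the pure weight factor) is tame enough near $D$ that its $\dd\db$-term drops out. The difficulty is that $\vb\cap F^i_j$ is only a coherent subsheaf, so the "blocks" of $\vb$ relative to the decomposition $\bigoplus_j Q^i_j$ are not smooth subbundles near $D_i$; one must argue that the vanishing-order jumps are accounted for exactly by the integers $\rank(\fb^i_j\vb/\fb^i_{j+1}\vb)$ and the weights $\beta^i_j$ — this is where the definition of $\beta^i_j$ as a max over compatible $\alpha^i_k$ does its work — and that the leftover factor, while possibly singular along the codimension-$2$ bad locus, is bounded (or at least $L^\infty$ plus something with integrable $\dd\db$) so that the integration-by-parts step against the Gauduchon form $\omega$ is valid. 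I would handle the bad locus by the same blow-up trick as in \Cref{prop-analyticdegree-subsheaf-classical}: pull everything back to $\widetilde M=\Bl_{x_0}\mbar$, where $\pi^*\vb$ becomes locally free after twisting by $\oo(-mP_0)$, carry out the weight computation there, and check via \Cref{prop-projection} and Poincar\'e--Lelong that the exceptional divisor contributes zero to the degree.
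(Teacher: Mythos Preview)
Your overall architecture matches the paper's proof exactly: write $\tr F_{H_{par}|_\vb}=\tr F_{K_1|_\vb}-\dd\db\log\det h_\vb$, invoke \Cref{prop-analyticdegree-subsheaf-classical} for the first piece, pull the weight factor out of $\det h_\vb$ and apply Poincar\'e--Lelong, and blow up the finite bad set to make $\vb$ a genuine subbundle. The paper does precisely this, working on $\widetilde M$ throughout and denoting your remainder $g_i$ by $\Psi$.

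The genuine gap is your sentence ``whose $\dd\db$ contributes nothing after pairing with $\omega$ and using $\dd\db\omega=0$, exactly as the term $\log f$ was handled in \Cref{prop-e-an=par}.'' In that proposition $f$ extended \emph{smoothly} over $\mbar$, so Stokes was immediate. Here the remainder is \emph{not} smooth across $D$: via a Cauchy--Binet computation in a local holomorphic frame adapted to the induced flag, the paper shows $\Psi$ has the form $A+\sum_l B_l\prod_i\|\sigma_i\|^{2\gamma_{il}}$ with $\log A,\log B_l$ smooth and $\gamma_{il}>0$ fractional. Thus $\log\Psi$ is bounded, but $\dd\log\Psi$ and $\dd\db\log\Psi$ carry honest singularities of order $\|\sigma_i\|^{2\gamma-1}$ and $\|\sigma_i\|^{2\gamma-2}$. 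The vanishing of $\int\db\dd\log\Psi\wedge\pi^*\omega$ then requires the two-step integration by parts of \cite[Lemma~5.2]{MR944577}, for which one must verify three separate conditions: $\log\Psi\in L^2$, $\dd\log\Psi\in L^2$, and $\db\dd\log\Psi\wedge\pi^*\omega\in L^1$. A further subtlety you do not anticipate arises on the exceptional divisor $P_0$: there one cannot rule out a pure $\log|z_1|^2$ term in $\log\Psi$, and $\dd\log|z_1|^2=z_1^{-1}dz_1\notin L^2$. What saves the argument is the degeneracy of $\pi^*\omega$ along $P_0$, which gives $\dd\log|z_1|^2\wedge\pi^*\omega\in L^\infty$; so the correct intermediate condition is $\dd\log\Psi\wedge\pi^*\omega\in L^2$, not $\dd\log\Psi\in L^2$.
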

\begin{proof}
    Since $\mbar$ is a complex surface, $\vb$ is a holomorphic vector bundle and subbundle of $\eb$ outside finite points. As the proof of \Cref{prop-analyticdegree-subsheaf-classical}, we blow up the singular point $x_0$ of the quotient sheaf $\eb/\vb$. Let $D_i^*$ be the strict transform of $D_i$, and the total transform $\widetilde{D}=\sum_{i=1}^{m}D_i^*+P_0$ is a simple normal crossing divisor. Suppose $\sigma_i$, $\sigma_i^*$, $\rho$ are the canonical section of $\oo_{\mbar}(D_i)$, $\oo_{\widetilde{M}}(D_i^*)$ and $\oo_{\widetilde{M}}(P_0)$ respectively, then we have $\pi^*(\sigma_i)=\sigma_i^*\otimes\rho$ if $x_0\in D_i$. 

    Recall the parabolic degree of $\vb$ is defined by
    \begin{align*}
        \pardeg_\omega\vb:=\deg_\omega\vb+\sum_{i,j}k^i_j\beta^i_j\deg_\omega[D_i],
    \end{align*}
    where $\beta^i_j$ are weights attached to the induced flag $\vb|_{D_i}=\fb^i_1\vb\supseteq \cdots\supseteq \fb^i_{n_i}\vb\supseteq 0$ and $k^i_j:=\rank(\fb^i_j\vb/\fb^i_{j+1}\vb)$ are multiplicities of $\beta^i_j$.

    Since we have already shown that
    \begin{align*}
        \frac{\ii}{2\pi}\int_{M}\tr(F_{K_1|_\vb})\wedge \omega=\deg_\omega\vb,
    \end{align*}
    and
    \begin{align*}
        &\frac{\ii}{2\pi}\int_{\widetilde{M}\backslash\pi^*(D)}\db\dd\log\left(\prod_{i,j}||\pi^*(\sigma_i)||^{2k^i_j\beta^i_j}\right)\wedge\pi^*\omega\\
        =& \frac{\ii}{2\pi}\int_{M}\db\dd\log\left(\prod_{i,j}||\sigma_i||^{2k^i_j\beta^i_j}\right)\wedge\omega\\
        =& \sum_{i,j}k^i_j\beta^i_j\deg_\omega[D_i],
    \end{align*}
    it suffices to show
    \begin{align*}
        \frac{\ii}{2\pi}\int_{\widetilde{M}\backslash\pi^*(D)}\db\dd\log\left(\prod_{i,j}||\pi^*(\sigma_i)||^{-2k^i_j\beta^i_j}\det(\pi^*K_1|_{\pi^*\vb})^{-1}(\pi^*\hp|_{\pi^*\vb})\right)\wedge\pi^*\omega=0.
    \end{align*}

    Let $\Psi:=\left(\prod_{i,j}||\pi^*(\sigma_i)||^{-2k^i_j\beta^i_j}\right)\det\left((\pi^*K_1|_{\pi^*\vb})^{-1}(\pi^*\hp|_{\pi^*\vb})\right) \in C^\infty(\widetilde{M}\backslash\pi^*(D))$. By \cite[Lemma 5.2]{MR944577}, it suffices to check the following integrability conditions:
    \begin{align*}
        \db\dd\log\Psi\wedge\pi^*\omega\in L^1,\ \dd\log\Psi\in L^2,\ \log\Psi\in L^2.
    \end{align*}
    Under these conditions, we have
    \begin{align*}
        \int\db\dd\log\Psi\wedge\pi^*\omega=\int \dd\log\Psi\wedge\pi^*\db\omega=-\int\log\Psi\pi^*\dd\db\omega=0.
    \end{align*}

    First, $\log\Psi\in L^2$ follows from the fact that $\hp$ has at most polynomial growth. By the construction, there exists constant $C>0$, $\lambda_i>0$, such that
    \begin{align*}
        C\iv\prod_i||\pi^*\sigma_i||^{\lambda_i} \leqs\Psi\leqs C\prod_i||\pi^*\sigma_i||^{-\lambda_i}.
    \end{align*}
     Thus $\left|\log\Psi\right|\leqs C(\sum_i\log||\pi^*\sigma_i||+1)$. Since $\log|z|\in L^2(B_1)$ and $\log(|z_1|^2+|z_2|^2)\in L^2(B_1)$, we have $\log\Psi\in L^2$.


    For any $p\in\pi^*(D_i)\backslash P_0$, we take a neighborhood $U$ of $p$. For simplicity of notation, we assume each weight is of multiplicity $1$, and general case is similar. Let $f_1,\cdots,f_s$ be the holomorphic basis of $\vb$ on $U$, such that $\fb^i_j\vb=\langle f_j,\cdots,f_s\rangle$. Let $e^i_k$ be the smooth basis of $Q^i_j$ constructed in previous section, then there exists functions $c_{kj}\in C^\infty(U)$ such that 
    \begin{align*}
        f_k=c_{kj}\cdot\pi^*e^i_j\otimes f,
    \end{align*}
    where $f$ is holomorphic basis of $\oo(-P_0)$. Assume $\beta^i_k=\alpha^i_{j_k}$, we have $c_{kj}\neq 0$ for $j=j_k$ and $c_{kj}=0$ for $j<j_k$. By definition of $\hp$, we have
    \begin{align*}
        \langle f_k,f_l\rangle_{\hp}=\sum_{j=1}^r\prod_{i=1}^m||\sigma_i||^{2\alpha^i_j}c_{kj}\bar{c}_{lj}.
    \end{align*}
    Let $K_0:=\left(\langle f_k,f_l\rangle_{\hp}\right)_{1\leqs k,l\leqs s}$, $S:=\diag\left\{\prod_i||\sigma_i||^{\alpha^i_1},\cdots,\prod_i||\sigma_i||^{\alpha^i_r}\right\}$ and $C:=(c_{kj})_{1\leqs k\leqs s,1\leqs j\leqs r}$, then we have $K_0=CS^2C^*=(CS)(CS)^*$, where $(CS)_{kj}=c_{kj}\prod_i||\sigma_i||^{\alpha^i_j}$. By Cauchy-Binet formula, we have
    \begin{align*}
        \prod_{i,j}||\sigma_i||^{-2\beta^i_j}\det K_0=\sum_{1\leqs \tilde{j}_1<\cdots<\tilde{j}_s\leqs r}\left|\det\left(c_{k\tilde{j}_l}\prod_i||\sigma_i||^{\alpha^i_{\tilde{j}_l}-\alpha^i_{j_k}}\right)\right|^2=A+\sum_l B_l\prod_i||\sigma_i||^{2\gamma_{il}},
    \end{align*}
    where $\log A,\log B_l\in C^\infty(U)$, $\gamma\geqs \min\{\alpha^i_{j+1}-\alpha^i_j:1\leqs j\leqs r\}$. Moreover, since $K_1$ is smooth metric on $\mbar$, $\Psi$ has same form as above. Thus,
    \begin{align*}
        \db\dd\log\Psi=&\sum_i A_{i}\dd\db||\sigma_i||^{2\gamma_i}+\sum_{i\neq j}B_{ij}\dd||\sigma_i||^{2\gamma_i}\db||\sigma_j||^{2\gamma_j}+l.o.t=O(||\sigma_i||^{2\gamma-2})\in L^1,\\
        \dd\log\Psi=&\sum_i C_i\dd||\sigma_i||^{2\gamma_i}+O(1)=O(||\sigma_i||^{2\gamma-1})\in L^2.
    \end{align*}

    If $p\in P_0$, we first compute the expression of $\pi^*\omega$. Recall the blowup is given by
    \begin{align*}
        \Bl_{x_0}U:=\{\left((z_1,z_2),[\xi_0:\xi_1]\right)\in U\times\proj^1:z_1\xi_1=z_2\xi_0\}.
    \end{align*}
    In the affine chart $\xi_0=1$, we have $z_2=z_1\xi_1$. The exceptional divisor $P_0$ is given by $z_1=0$. Then 
    \begin{align*}
        \pi^*\omega&=\pi^*\left(\ii\sum_{i,j=1}^2g_{i\bar{j}}dz_i\wedge d\bar{z}_j\right)\\
        &=a_{1\bar{1}}dz_1\wedge d\bar{z}_1+a_{1\bar{2}}\bar{z}_1 dz_1\wedge d\bar{\xi}_1+a_{2\bar{1}}z_1d\xi_1\wedge d\bar{z}_1+a_{2\bar{2}}|z_1|^2d\xi_1\wedge d\bar{\xi}_1.
    \end{align*}
    Since we don't have the vanishing of some $c_{kj}$ on $P_0$, in general we only have $\Psi=\sum_i A_i||z_1||^{2\lambda_i}$ for some $\lambda_i\in\rean$ and $\log A_i\in C^\infty(U)$. Some of $\lambda_i$ may be negative. Let $\lambda_0:=\min\{\lambda_i\}$, then $\log\Psi=\lambda_0\log|z_1|^2+\log\left(A_0+\sum_{i\neq0} A_i|z_1|^{2\tilde{\lambda}_i}\right)$, where $\tilde{\lambda}_i=\lambda_i-\lambda_0>0$ for $i\neq0$. Note that
    \begin{align*}
        \dd\db\log\left(A_0+\sum_{i\neq0} A_i|z_1|^{2\tilde{\lambda}_i}\right)&=O(|z_1|^{2\tilde{\lambda}-2})\in L^1,\\
        \dd\log\left(A_0+\sum_{i\neq0} A_i|z_1|^{2\tilde{\lambda}_i}\right)&=O(|z_1|^{2\tilde{\lambda}-1})\in L^2.
    \end{align*}
    By Poincar\'{e}-Lelong formula, we have $\dd\db(\lambda_0\log|z_1|^2)\in L^{\infty}$. Thus $\dd\db\log\Psi\in L^1$. However, $\dd\log|z_1|^2=z_1\iv dz_1\notin L^2$. Actually, to integrate by part, it suffices to get $\dd\log\Psi\wedge\pi^*\omega\in L^2$. By the computation above,
    \begin{align*}
        \dd\log|z_1|^2\wedge\pi^*\omega=a_{2\bar{1}}dz_1\wedge d\xi_1\wedge d\bar{z}_1+a_{2\bar{2}}\bar{z}_1dz_1\wedge d\xi_1\wedge d\bar{\xi}_1\in L^\infty.
    \end{align*}
    Thus we always have $\dd\log\Psi\wedge\pi^*\omega\in L^2$. This proves the proposition.

\end{proof}

\begin{proposition}\label{prop-subsheaf-an=par}
    For any Hermitian metric $H$ compatible with the parabolic structure, we have 
    \begin{align*}
        d(\vb,H)=\pardeg_{\omega}\vb.
    \end{align*}
\end{proposition}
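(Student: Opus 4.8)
The plan is to reduce to \Cref{prop-subsheaf-andeg=pardeg} and then run the same integration-by-parts device as in \Cref{prop-e-an=par}. Since \Cref{prop-subsheaf-andeg=pardeg} already gives $\pardeg_\omega\vb=d(\vb,\hp)$, it is enough to show $d(\vb,H)=d(\vb,\hp)$ for every metric $H=\hp h$ compatible with the parabolic structure. Because $\mbar$ is a surface and $\eb/\vb$ is torsion free, $\vb$ is a holomorphic subbundle of $\eb$ over $M\setminus Z$ for some finite set $Z\subset M$, and both analytic degrees are evaluated by integrals over $M\setminus Z$ (which has full measure). On $M\setminus Z$ set
\begin{align*}
    g:=\pi_{par}\circ h\circ\iota_\vb\in\Gamma(\End\vb),
\end{align*}
where $\pi_{par}$ is the $\hp$-orthogonal projection of $\eb$ onto $\vb$ and $\iota_\vb$ the inclusion. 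Then $H|_\vb=(\hp|_\vb)\,g$, $g$ is positive and $\hp|_\vb$-self-adjoint, and, exactly as in \Cref{prop-e-an=par}, $\tr F_{H|_\vb}-\tr F_{\hp|_\vb}=-\dd\db\psi$ with
\begin{align*}
    \psi:=\log\det g=\log\frac{\det(H|_\vb)}{\det(\hp|_\vb)}\in C^\infty(M\setminus Z).
\end{align*}
Hence $d(\vb,H)-d(\vb,\hp)=-\frac{\ii}{2\pi}\int_{M\setminus Z}\dd\db\psi\wedge\omega$, and the task is to show this integral vanishes.

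Next I would check the three integrability properties of $\psi$ that legitimize the Gauduchon integration by parts, in parallel with the three compatibility conditions. First, $\psi$ is bounded: mutual boundedness of $H$ and $\hp$ on $\eb$ restricts to mutual boundedness of $H|_\vb$ and $\hp|_\vb$ on $\vb$, so $\det g$ is pinched between two positive constants and $\psi\in L^\infty(M)\subset L^2(M,\omega)$. Second, $\db\psi\in L^2(M,\omega)$: from $\db\psi=\tr(g\iv\db g)$ with $g\iv$ bounded and $\db g=(\db\pi_{par})\,h\iota_\vb+\pi_{par}(\db h)\iota_\vb$ (since $\db\iota_\vb=0$), this reduces to $|\db h|_{\hp,\omega}\in L^2$ (compatibility condition (2)) and to $|\db\pi_{par}|_{\hp,\omega}\in L^2$; the latter is the $L^2$-boundedness of the second fundamental form of $\vb$ in $(\eb,\hp)$, which follows from the Gauss--Codazzi identity $\ii\Lambda_\omega\tr F_{\hp|_\vb}=\ii\tr(\pi_{par}\Lambda_\omega F_{\hp})-|\db\pi_{par}|^2_{\hp,\omega}$ together with $|\Lambda_\omega F_{\hp}|_{\hp}\in L^\infty$ (\Cref{prop-tracecurvaturebdd}) and the absolute convergence of $\int_M\tr F_{\hp|_\vb}\wedge\omega$ established in \Cref{prop-subsheaf-andeg=pardeg}. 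Third, $\Lambda_\omega\dd\db\psi\in L^1(M,\omega)$, equivalently $\Lambda_\omega\tr F_{H|_\vb}\in L^1$ (recall $\Lambda_\omega\tr F_{\hp|_\vb}\in L^1$ by \Cref{prop-subsheaf-andeg=pardeg}): by the Gauss--Codazzi identity $\ii\Lambda_\omega\tr F_{H|_\vb}=\ii\tr(\pi_H\Lambda_\omega F_H)-|\db\pi_H|^2_{H,\omega}$, with $\pi_H$ the $H$-orthogonal projection onto $\vb$, the first term is $O(|\Lambda_\omega F_H|_H)\in L^1$ by condition (3); for the second, since $\ker\pi_H=h\iv(\ker\pi_{par})$ the difference $\db\pi_H-\db\pi_{par}$ is controlled by $h,h\iv$ (bounded) and $\db h$ ($L^2$), so $\db\pi_H\in L^2$ and $|\db\pi_H|^2\in L^1$. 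In particular $d(\vb,H)$ is a well-defined finite number.

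With these three properties I would conclude exactly as in \Cref{prop-e-an=par}: by \cite[Lemma 5.2]{MR944577} the integration by parts
\begin{align*}
    \int_{M\setminus Z}\dd\db\psi\wedge\omega=\int_{M\setminus Z}\db\psi\wedge\dd\omega=-\int_{M\setminus Z}\psi\,\db\dd\omega=0
\end{align*}
is valid along the divisor $D$ (where $\db\dd\omega=0$ because $\omega$ is Gauduchon and $n=2$), while the finite set $Z$ of interior singular points is removed by a logarithmic cutoff, using that points have zero $W^{1,2}$-capacity on a complex surface; alternatively one pulls everything back to $\Bl_Z\mbar$ and treats the exceptional curves together with the strict transform of $D$, as in the proof of \Cref{prop-subsheaf-andeg=pardeg}. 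This gives $d(\vb,H)=d(\vb,\hp)=\pardeg_\omega\vb$.

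The main obstacle is the third property: one must control genuinely second-order quantities ($\Lambda_\omega\dd\db$ of $h$ and of the projection, not merely their first derivatives), which forces the comparison of the two orthogonal projections $\pi_H$ and $\pi_{par}$ and, near $D$, the use of the explicit structure of $\hp$ along the flag behind \Cref{prop-Fcurvature-compute} and the Cauchy--Binet computation in \Cref{prop-subsheaf-andeg=pardeg}, together with the standard regularity of the weakly holomorphic projection onto the saturated subsheaf $\vb$ near $Z$. It is the bookkeeping across these regimes, rather than any single estimate, that is delicate.
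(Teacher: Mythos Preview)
Your approach is correct but takes a genuinely different route from the paper's. The paper reruns the local computation of \Cref{prop-subsheaf-andeg=pardeg} with $H$ in place of $\hp$: writing $h=PP^{*}$ in the $\hp$-orthonormal frame $\tilde e^{\,i}_j$, they feed the factor $P$ into the Cauchy--Binet expansion of $\prod_{i,j}\|\sigma_i\|^{-2\beta^i_j}\det K^H_0$ and assert that the integrability checks of \Cref{prop-subsheaf-andeg=pardeg} go through because $h_{jk}\in L^\infty$ and $\db h_{jk}\in L^2$. You instead compare directly with the already-established value $d(\vb,\hp)$: the quantity to annihilate is $\int_M\dd\db\log\det g\wedge\omega$, and you verify the three hypotheses of \cite[Lemma~5.2]{MR944577} via the Chern--Weil/Gauss--Codazzi identity rather than via local expansions. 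The conceptual gain of your route is that it makes explicit why compatibility condition~(3), $|\Lambda_\omega F_H|_H\in L^1$, is actually needed for the $L^1$-bound on $\Lambda_\omega\dd\db\psi$ (a genuinely second-order quantity in $h$ that $h\in L^\infty$ and $\db h\in L^2$ alone cannot reach); the paper's one-line appeal to $h_{ij}\in L^\infty$, $\db h_{ij}\in L^2$ glosses over precisely this point. Conversely, the paper's approach sidesteps your bookkeeping with the two projections $\pi_H,\pi_{par}$ and the singular set $Z$, since all of that is already absorbed in the blowup performed in \Cref{prop-subsheaf-andeg=pardeg}. Your control of $\db\pi_H$ through $\db\pi_{par}$ and $\db h$ is correct (one has $\pi_H=\iota_\vb\,g^{-1}\pi_{par}h$, so $\db\pi_H$ is an algebraic expression in bounded operators applied to $\db h$ and $\db\pi_{par}$), and removing the finite set $Z$ by logarithmic cutoff is routine in real dimension four.
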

\begin{proof}
    Let $h=\hp\iv H\in \End(\eb|_M)$. Let $e^i_j$ be the basis defined above, then $\tilde{e}^i_j:=\left(\prod_t||\sigma_t||^{-\alpha^t_j}\right)e^i_j$ is the orthonormal basis w.r.t $\hp$. Let $h(\tilde{e}^i_j)=h_{jk}\tilde{e}^i_k$, then by definition of compatibility, we have $h_{jk}\in L^\infty$, $\db h_{jk}\in L^2$. We follow the argument of Proof of \Cref{prop-subsheaf-andeg=pardeg}, and most of arguments are same. Assume 
    \begin{align*}
        f_k=c_{kj}\cdot \pi^*e^i_j\otimes f=\left(c_{kj}\prod_t||\sigma_t||^{\alpha^t_j}\right)\pi^*\tilde{e}^i_j\otimes f,
    \end{align*}
     then we have
     \begin{align*}
        \langle f_k,f_l\rangle_{H}=\sum_{j=1}^r\prod_{i=1}^m||\sigma_i||^{\alpha^i_p+\alpha^i_q}c_{kp}\bar{c}_{lq}h_{pq}.
    \end{align*}
    Since $h$ is a Hermitian matrix, there exists nonsingular matrix $P$ such that $h=PP^*$. Then
    \begin{align*}
        \prod_{i,j}||\sigma_i||^{-2\beta^i_j}\det K^H_0&=\sum_{1\leqs \tilde{j}_1<\cdots<\tilde{j}_s\leqs r}\left|\det\left(c_{k\tilde{j}_l}\prod_i||\sigma_i||^{\alpha^i_{\tilde{j}_l}-\alpha^i_{j_k}}\right)\right|^2\cdot\left|\det(p_{k\tilde{j}_l})\right|^2\\
        &=A+\sum_l B_l\prod_i||\sigma_i||^{2\gamma_{il}}.
    \end{align*}
    We still have the needed integrability conditions by $h_{ij}\in L^\infty$ and $\db h_{ij}\in L^2$.
\end{proof}
\begin{proposition}\label{prop-stablityequiv}
    $\eb$ is parabolic stable if and only if $(\eb,H)$ is analytic stable with respect to $\omega$ for any Hermitian metric $H$ compatible with the parabolic structure.
\end{proposition}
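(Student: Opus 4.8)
The plan is to deduce the equivalence immediately from the two degree identities already in hand: $d(\eb,H)=\pardeg_\omega\eb$ from \Cref{prop-e-an=par}, and $d(\vb,H)=\pardeg_\omega\vb$ for every coherent subsheaf $\vb\subseteq\eb$ over $\mbar$ with torsion-free quotient from \Cref{prop-subsheaf-an=par}, both valid for an arbitrary Hermitian metric $H$ compatible with the parabolic structure. Parabolic stability of $\eb$ quantifies over coherent subsheaves of $\eb$ on $\mbar$, while analytic stability of $(\eb,H)$ quantifies over coherent subsheaves of $\eb|_M$ on $M$; so the proposition amounts to setting up a dictionary between these two families under which rank and analytic degree are matched, after which the two slope inequalities coincide term by term. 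Incidentally this also shows that analytic stability does not depend on the choice of compatible $H$.

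One direction, ``$(\eb,H)$ analytic stable $\Rightarrow$ $\eb$ parabolic stable'', I would prove by restriction. If $\vb\subseteq\eb$ is a proper coherent subsheaf over $\mbar$ with torsion-free quotient, then $\vb|_M$ is a coherent subsheaf of $\eb|_M$ with torsion-free quotient, $\rank(\vb|_M)=\rank\vb$, and $\vb|_M\neq\eb|_M$ (otherwise $\eb/\vb$ is supported on $D$, hence torsion, yet torsion-free, hence zero, forcing $\vb=\eb$). Since $d(\vb,H)$ and $d(\vb|_M,H)$ are literally the same integral over $M$, \Cref{prop-subsheaf-an=par} gives $d(\vb|_M,H)=\pardeg_\omega\vb$; combined with $d(\eb,H)=\pardeg_\omega\eb$, analytic stability applied to $\vb|_M$ becomes precisely parabolic stability for $\vb$.

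For the converse, ``$\eb$ parabolic stable $\Rightarrow$ $(\eb,H)$ analytic stable'', let $\vb'\subseteq\eb|_M$ be a proper coherent subsheaf with torsion-free quotient. By the Gauss--Codazzi identity $\tr(\ii\Lambda_\omega F_{H|_{\vb'}})=\tr(\ii\Lambda_\omega\pi F_H)-|\db\pi|^2_{H,\omega}$ for the $H$-orthogonal projection $\pi$ onto $\vb'$, together with the compatibility assumption $\Lambda_\omega F_H\in L^1$, the analytic degree $d(\vb',H)$ is well defined in $[-\infty,+\infty)$; if it equals $-\infty$ the required inequality is vacuous, so I may assume it finite. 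Passing to the saturation of $\vb'$ in $\eb|_M$ leaves the rank unchanged, can only increase the (still finite) analytic degree, and keeps it a proper subsheaf, so I may also assume $\vb'$ saturated, hence reflexive, hence — since $M$ is a smooth complex surface — locally free and an honest subbundle of $\eb|_M$ away from a finite set; in particular $\db\pi\in L^2(M)$. The key step is then to extend $\vb'$ to a coherent subsheaf $\vb\subseteq\eb$ over $\mbar$ with torsion-free quotient, of the same rank, with $\vb|_M=\vb'$; granting this, $d(\vb',H)=d(\vb,H)=\pardeg_\omega\vb$ by \Cref{prop-subsheaf-an=par}, $\vb$ is again proper, and parabolic stability of $\eb$ yields $d(\vb',H)/\rank\vb'<d(\eb,H)/\rank\eb$, as wanted.

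The one genuine difficulty is this extension step. Extending $\vb'$ across the finite bad set inside $M$ is the standard extension theorem for coherent subsheaves across analytic subsets of codimension at least two; extending across $D$ is the real point. I would reduce to a line subbundle by passing to $\wedge^s\vb'\subseteq\wedge^s\eb|_M$ with $s=\rank\vb'$, and then exploit that the $L^2$ bound on the second fundamental form forces at most polynomial growth near $D$ of the section cutting out this line, so that a removable-singularity argument of Uhlenbeck--Yau type — the same regularity mechanism underlying Simpson's correspondence, cf.\ \cite{MR944577,MR1373062,MR1701135} — produces a meromorphic, hence coherent, extension; here the bound $|\Lambda_\omega F_{\hp}|_{\hp}\in L^\infty$ from \Cref{prop-tracecurvaturebdd} is exactly what makes the estimates available. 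Finally, running both directions with ``$\leqslant$'' in place of ``$<$'' gives the corresponding equivalence for semistability.
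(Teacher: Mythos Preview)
Your argument is correct and follows the same skeleton as the paper's: both directions come down to the degree identities of \Cref{prop-e-an=par} and \Cref{prop-subsheaf-an=par}, with the only substantive issue being the extension of a saturated subsheaf $\vb'\subseteq\eb|_M$ to a subsheaf $\vb\subseteq\eb$ over $\mbar$. You assign the extension to the implication ``parabolic stable $\Rightarrow$ analytic stable'' --- which is the logically correct place, since analytic stability quantifies over the larger family of subsheaves on $M$ --- whereas the paper's wording attaches it to the other implication; in either presentation the content is the same once one observes (as you do) that restriction and extension set up a bijection between saturated subsheaves preserving rank and degree.

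Two small points. First, your sketched mechanism for the extension (pass to $\wedge^s\vb'$, use the $L^2$ bound on $\db\pi$ to force polynomial growth, then invoke an Uhlenbeck--Yau/Simpson removable-singularity argument) is plausible but not what the paper actually cites: the paper defers entirely to Li--Narasimhan, who first use the Siu--Trautmann gap-sheaf theorem to reduce to a local statement on a polydisc and then prove that directly, without passing to determinants. If you want your route to be self-contained you would still need to argue that the extended line in $\wedge^s\eb$ stays decomposable across $D$, so that it comes from a rank-$s$ subbundle. Second, your appeal to \Cref{prop-tracecurvaturebdd} at the end is slightly off: that bound is for $\hp$, not for a general compatible $H$. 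This is harmless, since --- as you yourself observe --- analytic stability is independent of the choice of compatible metric, so one may run the extension argument with $H=\hp$ from the start.
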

\begin{proof}
    If $\eb$ is parabolic stable, then by \Cref{prop-e-an=par} and \Cref{prop-subsheaf-an=par}, $(\eb,H)$ is analytic stable with $\omega$ for any $H$ compatible with parabolic structure. If $(\eb,H)$ is analytic stable, since the analytic stability only tests the coherent subsheaves of $\eb|_M$ rather than $\eb$, we need to prove an extension lemma as \cite[Section 6]{MR1701135}. Li-Narasimhan first use a theorem of Siu-Trautmann\cite[Theorem 2.2]{MR287033} to reduce the extension to a local problem, then proved the extension on a polydisc. Since the argument works for general complex manifolds, we can prove this in the same way.
\end{proof}

\section{Proof of Theorem \ref{thm-KHset}}

Now we are ready to prove the \Cref{thm-KHset}.
\begin{proof}[Proof of \Cref{thm-KHset}]
    Given $\eb$ is parabolic stable. By \Cref{prop-stablityequiv}, we have $(\eb,H_{par})$ is analytic stable with respect to $\omega$. By \Cref{thm-Zhangxi}, it suffices to check the conditions in the theorem. We know $|\Lambda_\omega F_{H_{par}}|_{H_{par}}$ is bounded on $M$ by \Cref{prop-tracecurvaturebdd}. Since $M$ is a Zariski open subset of the compact manifold $\mbar$, we can prove that assumptions (A1)-(A3) holds for the restriction metric $\omega$ by arguments similar to \cite[Proposition 2.2]{MR944577}. (A1) is trivial since $\vol(M,\omega|_M)=\vol(\mbar,\omega)<\infty$. For (A2), take $\phi=-\sum_i\log||\sigma_i||^2+C\geqs0$. $\phi$ is an exhaustion function and $\ii\dd\db\phi=-2\pi c_1(\oo_X(D))\in L^\infty(M)$. For (A3), each bounded function $f$ on $M$ with $\Delta f\geqs-B$ can be considered as a function on $\mbar$ that satisfies $\Delta f\geqs-B$ in distribution sense. Then the estimate follows from the standard Moser iteration.
    
    Since $|d\omega|_g\in C^\infty(\mbar)\subseteq L^2(M)$, we can apply \Cref{thm-Zhangxi} to get a Hermitian-Einstein metric $H$ on $\eb|_M$ compatible with the parabolic structure.

    Assume $H$ be a Hermitian-Einstein metric on $\eb|_M$ and compatible with the parabolic structure. Chern-Weil formula\cite[Lemma 3.2]{MR944577} tells us for any coherent subsheaf $\vb$ of $\eb|_M$,
    \begin{align*}
        d(\vb,H)=\ii\int_M\tr(\pi_V\Lambda_\omega F_H)-\int_M|\db\pi_V|^2_H,
    \end{align*}
    where $\pi_V$ is the orthogonal projection to $\vb$ with respect to $H$.
    Since $H$ is Hermitian-Einstein, $d(\vb,H)\leqs \frac{\rank \vb}{\rank \eb}d(\eb,H)$, i.e. analytic semistable. Moreover, the equality holds iff $\db\pi_V=0$ outside finite points. Assume there exists $\vb$ such that the equality holds. Since $\pi_V$ is holomorphic outside these points and bounded, it can be extended to a holomorphic section of $\End(\eb|_M)$.

    For any $p\in D$, there exists a neighborhood $U$ of $p$ and orthonormal basis $\{e_1,\cdots,e_r\}$ of $(\eb|_U,H_1)$ such that $\tilde{e}_j=\prod_i ||\sigma_i||^{-\alpha^i_j}e_j$ is the orthonormal basis of $H_{par}$. Thus there exists $a_j^k\in C^\infty(\bar{U})$ such that $\pi_V(\tilde{e}_j)=a_j^k\tilde{e}_k$. Then we have
    \begin{align*}
        \pi_V(e_j)=\prod_i ||\sigma_i||^{\alpha^i_j-\alpha^i_k}a_j^k e_k=:\tilde{a}_j^k e_k.
    \end{align*}
    Since $\alpha^i_j-\alpha^i_k>-1$, $\pi_V$ can be extended holomorphically from $U\backslash D$ to $U$.
    
    Since $\pi_V$ can be extended to a holomorphic projection endomorphism of $\eb$, this contradicts with the indecomposable condition.
    
\end{proof}


\section{Appendix}
\renewcommand{\thetheorem}{A.\arabic{theorem}}
\setcounter{section}{0} 

\begin{lemma}[Orthogonal foliation of tubular neighborhood]\label{lmm-foliation}
    Let $X$ be a compact complex manifold, $D\subset X$ is a smooth divisor. Then there exists a smooth foliation of some tubular neighborhood $N$ of $D$ such that each leaf is a complex disc. Moreover, if $\omega$ is a Hermitian metric on $X$, each leaf can be chosen to be orthogonal to $D$.
\end{lemma}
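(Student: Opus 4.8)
The plan is to produce the foliation as the fibres of a smooth submersion $\pi\colon N\to D$ on a tubular neighbourhood $N$ of $D$ with $\pi|_D=\mathrm{id}_D$, arranged so that each fibre $\pi\iv(p)$ is a holomorphically embedded disc whose tangent line at $p$ is the $\omega$-orthogonal complement $\nu_p:=(T_pD)^{\perp}\subset T_pX$. Since such a holomorphic disc meets $D$ only at $p$, tangency to $\nu_p$ there is exactly the orthogonality asserted in the ``moreover'' part; for the bare existence statement (no metric fixed) one simply replaces $\nu$ by any smooth complex line subbundle of $TX|_D$ transverse to $TD$, e.g.\ a coordinate direction in each chart. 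The leaves of the foliation will then be the fibres of $\pi$.

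First I would write down the local model. Cover $D$ by finitely many holomorphic charts $(U_\alpha,(z_1^{\alpha},w^{\alpha}))$ of $X$, $w^{\alpha}=(z_2^{\alpha},\dots,z_n^{\alpha})$, with $D\cap U_\alpha=\{z_1^{\alpha}=0\}$. For $p=(0,w_0)\in D\cap U_\alpha$ let $c_\alpha(p)\in\cpxn^{n-1}$ be the unique vector for which $\partial/\partial z_1^{\alpha}+\sum_j c_\alpha(p)_j\,\partial/\partial w_j^{\alpha}$ spans $\nu_p$; solving the linear system given by the entries of $\omega$ at $p$ shows $c_\alpha$ is smooth in $p$. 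Then $\Psi_\alpha(p,t):=(t,\,w_0+t\,c_\alpha(p))$ is, for fixed $p$, a holomorphic embedding of $\{|t|<\epsilon\}$ into $U_\alpha$ with $\Psi_\alpha(p,0)=p$ and $\partial_t\Psi_\alpha(p,0)$ spanning $\nu_p$, and $(p,t)\mapsto\Psi_\alpha(p,t)$ has invertible differential along $D$, hence is a diffeomorphism of $(D\cap U_\alpha)\times\{|t|<\epsilon\}$ onto a neighbourhood of $D\cap U_\alpha$. Its inverse defines a local submersion $\pi_\alpha$ onto $D\cap U_\alpha$ whose fibres $\{(z_1^{\alpha},w):w=w_0+z_1^{\alpha}c_\alpha(p)\}$ are affine (hence holomorphic) discs, tangent to $\nu$ along $D$.

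Next I would globalize. The $\pi_\alpha$ need not agree on overlaps, and one cannot patch them by a naive average because the target $D$ is not a vector space, while a crude ($\omega$-geodesic, say) average of the leaves would destroy holomorphicity. Instead, consider the sheaf $\mathcal F$ on $D$ whose sections over an open $U$ are germs along $U$ of smooth submersions onto a neighbourhood of $U$ in $D$ that restrict to the identity on $D$ and have holomorphic-disc fibres tangent to $\nu$ along $D$. By the local model $\mathcal F$ is locally nonempty; and the space of such germs over a small $U$ is contractible, the difference of two of them being a smooth section of a bundle over $U$ whose fibrewise-holomorphicity constrains only the $1$-dimensional leaf direction, where it reduces to a $\bar\partial$-equation on a disc (always solvable) and imposes nothing in the $(n-1)$ directions along $D$. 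Hence $\mathcal F$ is soft, and, $D$ being paracompact, a partition of unity subordinate to $\{U_\alpha\cap D\}$ glues the $\pi_\alpha$ into a global $\pi\colon N\to D$; equivalently one glues inductively over the finitely many charts, extending across the next chart along that contractible space of admissible extensions. The fibres of $\pi$ are the required leaves: each is a holomorphic disc meeting $D$ only in its base point $p$, with tangent line $\nu_p=(T_pD)^{\perp}$ there, which gives the orthogonality.

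The main obstacle is this globalization step. Everything local is elementary, but turning the local holomorphic-disc foliations into a single smooth one is delicate precisely because ``holomorphic in the fibre'' is not preserved by arbitrary gluing and the ambient target is nonlinear; the resolution is to treat the patching sheaf-theoretically, exploiting that the holomorphy condition lives only on the $1$-dimensional leaves (where $\bar\partial$ is surjective) so that in the $(n-1)$ directions along $D$, where the partition of unity acts, nothing is obstructed, and the prescribed $1$-jet along $D$ (hence the orthogonality normalization) is automatically preserved under the gluing.
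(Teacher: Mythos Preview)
Your plan is essentially the paper's proof, dressed in more abstract language. Both arguments write the local leaves as graphs $z_*=f_{p,*}(z_1)$ (your $\Psi_\alpha(p,t)=(t,\,w_0+t\,c_\alpha(p))$ is exactly the paper's $g_{p,*}(z_1)=z_*(p)+z_1 v_{p,*}$), and both patch over a finite cover of $D$ via cutoff functions. The paper does this by explicit induction: with the foliation already built on $\bigcup_{i\le m}V_i$, it rewrites the existing leaves in the coordinates of the next chart $U_{m+1}$ as $z_*=f_{p,*}(z_1)$ and sets
\[
\tilde f_{p,*}(z_1)=(1-\phi(p))\,f_{p,*}(z_1)+\phi(p)\,g_{p,*}(z_1),
\]
with $\phi$ a cutoff on $D\cap W_{m+1}$. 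The whole point, which your sheaf formulation encodes but somewhat obscures, is that $\phi$ depends only on the base point $p\in D$ and not on $z_1$; hence for each fixed $p$ the right-hand side is a constant-coefficient combination of functions holomorphic in $z_1$, so holomorphicity of the leaves is preserved automatically, and orthogonality at $p$ survives because $f'_{p,*}(0)=g'_{p,*}(0)$ both span the intrinsic line $\nu_p$. Your appeal to ``a $\bar\partial$-equation on a disc (always solvable)'' is a red herring: there is no PDE to solve, and softness of your sheaf $\mathcal F$ comes simply from the fact that it is a torsor over a $C^\infty(D)$-module (differences of two local foliations are smooth maps holomorphic in $z_1$, vanishing to order~$2$ along $D$). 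Once you drop that phrase and write the averaging formula explicitly, your argument is the paper's.
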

\begin{proof}
    For any $x\in D$, there exists a local coordinate $(U,z_1,z_*)$ near $x$ such that $W:=U\cap D=\{z_1=0\}$, where $z_*=(z_2,\cdots,z_n)$. Since $D$ is compact, we can assume $D=\bigcup_{i=1}^NV_i$, where $V_i\subset\subset W_i$. We will construct the foliation over $V=\bigcup_{i=1}^N V_i$ by induction.
    
    First, for $V_1$, the foliation can be defined by $L^{(1)}_p:=\{x\in U_1:z_*(x)=g_{p,*}(z_1)\}$ for any $p\in V_1$, where we can take $g_{p,*}(z_1)\equiv z_*(p)$. If the foliation $\fb_m$ is already constructed over $\bigcup_{i=1}^m V_i$, we will construct a new foliation $\fb_{m+1}$ over $\bigcup_{i=1}^{m+1}V_i$ which coincides $\fb_m$ over $\left(\bigcup_{i=1}^m V_i\right)\backslash W_{m+1}$. In the coordinate $(U_{m+1},z_1,z_*)$, the leaves of $\fb_m$ can be written as $L^{(m)}_p=\{(z_1,z_*):z_k=f_{p,k}(z_1),2\leqs k \leqs n\}$ for any $p\in \left(\bigcup_{i=1}^m V_i\right)\bigcap W_{m+1}$, where the functions $f_{p,k}$ are holomorphic w.r.t $z_1$ and smooth w.r.t $p$. Take a smooth function $\phi$ on $W_{m+1}$ such that $\phi|_{V_{m+1}}=1$, $\supp\phi\subset\subset W_{m+1}$. Then we define the new leaves to be $L_p^{(m+1)}:=\{(z_1,z_*):z_k=\tilde{f}_{p,k}(z_1),2\leqs k \leqs n\}$ for any $p\in \left(\bigcup_{i=1}^{m+1} V_i\right)\bigcap W_{m+1}$, where 
    \begin{align*}
        \tilde{f}_{p,k}(z_1):=(1-\phi(p))f_{p,k}(z_1)+\phi(p)g_{p,k}(z_1).
    \end{align*}
    This gives the new foliation $\fb_{m+1}$ over $\bigcup_{i=1}^{m+1} V_i$.

    Given a Hermitian metric $\omega$, it suffices to modify the leaf function from $g_{p,*}(z_1)\equiv z_*(p)$ to $g_{p,*}(z_1)=z_*(p)+z_1v_{p,*}$, where $(1,v_{p,2},\cdots,v_{p,n})\in T^{1,0}_pX$ and orthogonal to $T^{1,0}_p D$.
\end{proof}

\begin{proposition}[Poincar\'{e}-Lelong formula]\label{prop-poincare-lelong}
    Let $(X^{2n},J)$ be a compact complex manifold, $D\subset X$ be a divisor. Suppose $\sigma\in H^0(X,\oo_X(D))$ is a nontrivial global section which vanishes on $D$, $h$ is a Hermitian metric on $\oo_X(D)$. Then we have
    \begin{align*}
        \frac{\ii}{2\pi}\db\dd\log||\sigma||^2_h=c_1(\oo_X(D))-\delta_D
    \end{align*}
    in the current sense. As a consequence, we have $\dd\db\log||\sigma||_h^2\in L^\infty(X\backslash D)$ and
    \begin{align*}
        \int_{X\backslash D}\frac{\ii}{2\pi}\db\dd\log||\sigma||_h^2\wedge\omega^{n-1}=\int_X c_1^{BC}(\oo_X(D))\wedge\omega^{n-1}=\deg_\omega [D],
    \end{align*}
    for any Gauduchon metric $\omega$. 
\end{proposition}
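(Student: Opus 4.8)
The plan is to reduce the whole statement to the classical local Poincar\'e--Lelong identity on a polydisc and then read off the two consequences. First I would fix a point $p\in D$, a coordinate chart $U$ and a local holomorphic frame $e$ of $\oo_X(D)$ on $U$ in which $\sigma=g\cdot e$ for $g$ the holomorphic defining function of $D$ in $U$. Then $\log||\sigma||_h^2=\log|g|^2+\log||e||_h^2$ on $U$, where $\log||e||_h^2\in C^\infty(U)$ and $\tfrac{\ii}{2\pi}\db\dd\log||e||_h^2$ is the first Chern form $c_1(\oo_X(D),h)$, a smooth global representative of $c_1(\oo_X(D))$. It remains to identify $\tfrac{\ii}{2\pi}\db\dd\log|g|^2$ with $-\delta_D$ as currents. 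Since $\log|g|^2\in L^1_{loc}$ it defines a $(0,0)$-current, and pairing it against a test $(n-1,n-1)$-form, integrating over $\{|g|>\varepsilon\}$, applying Stokes and letting $\varepsilon\to0$ produces exactly the current of integration along $\{g=0\}$, the minus sign coming from $\db\dd=-\dd\db$ together with the normalization $\tfrac{\ii}{\pi}\dd\db\log|g|=[\{g=0\}]$. Summing the two pieces gives $\tfrac{\ii}{2\pi}\db\dd\log||\sigma||_h^2=c_1(\oo_X(D))-\delta_D$. This $\varepsilon$-neighbourhood computation is the only genuine content and it is entirely classical (cf.\ Griffiths--Harris or Demailly), so I would either cite it or include the short argument; the one point that needs care is the sign bookkeeping just described.

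For the $L^\infty$ assertion I would observe that on $X\setminus D$ the defining function $g$ is nowhere zero, hence $\log|g|^2$ is pluriharmonic there, so on $U\setminus D$ one has $\dd\db\log||\sigma||_h^2=\dd\db\log||e||_h^2$, which is the restriction of a form that is smooth on all of $U$. Patching over a finite cover of a neighbourhood of $D$ (and noting $\log||\sigma||_h^2$ is smooth away from $D$ anyway), $\dd\db\log||\sigma||_h^2$ on $X\setminus D$ is the restriction of a smooth global form on the compact manifold $X$ --- namely $\tfrac{2\pi}{\ii}\,c_1(\oo_X(D),h)$ --- hence bounded.

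For the integral formula, since $D$ has Lebesgue measure zero and $\tfrac{\ii}{2\pi}\db\dd\log||\sigma||_h^2$ coincides on $X\setminus D$ with the smooth first Chern form $c_1(\oo_X(D),h)$, the integral over $X\setminus D$ equals $\int_X c_1(\oo_X(D),h)\wedge\omega^{n-1}$. Because $\omega$ is Gauduchon, $\dd\db\omega^{n-1}=0$, so any two smooth representatives of $c_1^{BC}(\oo_X(D))$ differing by $\ii\dd\db f$ give the same pairing: $\int_X \ii\dd\db f\wedge\omega^{n-1}=\int_X f\,\ii\dd\db\omega^{n-1}=0$ by Stokes. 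Hence $\int_X c_1(\oo_X(D),h)\wedge\omega^{n-1}=\int_X c_1^{BC}(\oo_X(D))\wedge\omega^{n-1}$, and this last quantity is, by the definition of the degree applied to the line bundle $[D]=\oo_X(D)$, equal to $\deg_\omega[D]$, which completes the proof.
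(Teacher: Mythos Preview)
Your proof is correct and follows essentially the same route as the paper's: localize via a holomorphic frame to split $\log\|\sigma\|_h^2$ into the pluriharmonic-off-$D$ piece $\log|g|^2$ plus a smooth term whose $\tfrac{\ii}{2\pi}\db\dd$ is the Chern form, cite the classical local Poincar\'e--Lelong identity for the first piece, and then read off the $L^\infty$ bound and the integral identity from the fact that on $X\setminus D$ the integrand is the restriction of the smooth global Chern form. The only cosmetic difference is that the paper names the trivialized metric $\hat h$ rather than $\|e\|_h^2$ and cites Demailly for the local identity rather than sketching the $\varepsilon$-tube Stokes argument.
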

\begin{proof}
    It suffices to show the case that $D$ is irreducible and reduced. Let $f:\oo_X(D)|_U\simeq U\times \cpxn$ be a trivialization of $\oo_X(D)$, then $f(\sigma|_U)=:\hat{\sigma}\in\oo(U)$. Let $\hat{h}:=f_*h\in C^\infty(U,\rean_+)$ be the Hermitian metric restricted to the trivialization such that $||\sigma||^2_h=|\hat{\sigma}|^2\cdot \hat{h}\in C^\infty(U)$. Then in $U$, we have 
    \begin{align*}
        \frac{\ii}{2\pi}\db\dd\log||\sigma||^2_h&=\frac{\ii}{2\pi}\db\dd\log|\hat{\sigma}|^2+\frac{\ii}{2\pi}\db\dd\log\hat{h}\\
        &=\frac{\ii}{2\pi}\db\dd\log|\hat{\sigma}|^2+\frac{\ii}{2\pi}\tr F_h.
    \end{align*}
     Thus it suffices to show that $\frac{\ii}{2\pi}\db\dd\log|\hat{\sigma}|^2=-\delta_D$, which is local problem and is a standard result, see \cite[(2.15)]{demailly1997complex}. 

     In $X\backslash D$, we always have $\dd\db\log|\hat{\sigma}|^2=\dd\db(\log\hat{\sigma}+\log\overline{\hat{\sigma}})=0$. Then $\db\dd\log||\sigma||^2=\tr F_h\in L^\infty(X\backslash D)$. Thus we have
     \begin{align*}
        \int_{X\backslash D}\frac{\ii}{2\pi}\db\dd\log||\sigma||_h^2\wedge\omega^{n-1}&=\int_{X\backslash D}\frac{\ii}{2\pi}\tr F_h\wedge\omega^{n-1}\\
        &=\int_{X}\frac{\ii}{2\pi}\tr F_h\wedge\omega^{n-1}\\
        &=\int_{X}c_1^{BC}(\oo_X(D))\wedge\omega^{n-1}.
     \end{align*}
\end{proof}

\begin{proposition}[\cite{MR909698}, Section 5.5]\label{prop-sheafregularity}
    Let $\fb$ be a coherent sheaf on a complex manifold $X$, then $\fb$ is locally free outside $Z\subset X$, where $\codim Z\geqs 1$. Moreover,
    \begin{enumerate}
        \item If $\fb$ is torsion free, then $\codim Z\geqs2$.
        \item If $\fb$ is reflexive, then $\codim Z\geqs 3$.
        \item If $\fb$ is reflexive and rank $1$, then $Z=\varnothing$, i.e. $\fb$ is a holomorphic line bundle. 
        \item If $\fb$ is subsheaf of a holomorphic vector bundle $\eb$ and the quotient $\eb/\fb$ is torsion free, then $\fb$ is reflexive.
    \end{enumerate}
\end{proposition}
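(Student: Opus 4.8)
The plan is to reduce everything to commutative algebra over the regular local rings $\oo_{X,x}$, organized through the coherent sheaves $\mathcal{E}xt^q_{\oo_X}(\fb,\oo_X)$. I would take two facts as black boxes, both established in \cite[Section~5.5]{MR909698}: (a) for a coherent sheaf $\fb$ on a complex manifold $X$, the sheaves $\mathcal{E}xt^q_{\oo_X}(\fb,\oo_X)$ are coherent and $\codim\supp\mathcal{E}xt^q_{\oo_X}(\fb,\oo_X)\geqs q$ for every $q\geqs1$; and (b) over the $n$-dimensional regular local ring $R=\oo_{X,x}$ one has $\mathrm{pd}_RM+\mathrm{depth}_RM=n$ (Auslander--Buchsbaum), $R$ is a unique factorization domain, and $\mathrm{pd}_RM=\sup\{q\geqs0:\Ext^q_R(M,R)\neq0\}$ for finitely generated $M$ (read off from a minimal free resolution). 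The first step is then: $\fb_x$ is $\oo_{X,x}$-free $\iff\mathrm{pd}_{\oo_{X,x}}\fb_x=0\iff\mathcal{E}xt^q_{\oo_X}(\fb,\oo_X)_x=0$ for all $q\geqs1$. Hence the non-locally-free locus is $Z=\bigcup_{q\geqs1}\supp\mathcal{E}xt^q_{\oo_X}(\fb,\oo_X)$, a finite union of analytic sets with $\codim Z\geqs1$; this is the unconditional statement. (A point to keep straight: ``locally free'' here permits the rank to jump between connected components of $X\setminus Z$, so the conclusion is stated as freeness of each stalk.)

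For part (1), it is enough to see that $Z$ has no codimension-one component, equivalently that $\fb_{\mathfrak p}$ is free for every height-one prime $\mathfrak p$ of every $\oo_{X,x}$; but $(\oo_{X,x})_{\mathfrak p}$ is a discrete valuation ring, over which a finitely generated torsion-free module is free, so $\codim Z\geqs2$ by Step~1. For part (2), reflexivity localizes, so for a height-two prime $\mathfrak p$ the module $\fb_{\mathfrak p}\cong(\fb_{\mathfrak p})^{**}$ over the two-dimensional regular local ring $R'=(\oo_{X,x})_{\mathfrak p}$ is, upon dualizing a finite free presentation of $(\fb_{\mathfrak p})^{*}$, the kernel of a map between finite free $R'$-modules; since a submodule of a free $R'$-module has depth $\geqs1$, the depth exact sequence gives $\mathrm{depth}_{R'}\fb_{\mathfrak p}\geqs2=\dim R'$, hence $\mathrm{pd}_{R'}\fb_{\mathfrak p}=0$ and $\fb_{\mathfrak p}$ is free. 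Thus $Z$ has no component of codimension $\leqs2$, which is part (2).

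For part (3), fix $x$ and put $R=\oo_{X,x}$: a rank-one torsion-free $R$-module is isomorphic to a nonzero ideal $I\subseteq R$, and reflexivity forces $I$ to coincide with its divisorial hull $\big(\bigcap_{\mathrm{ht}\,\mathfrak p=1}IR_{\mathfrak p}\big)\cap R$; since $R$ is a UFD every divisorial ideal is principal, so $I=(f)$ and $\fb_x\cong R$, i.e.\ $\fb$ is a line bundle. For part (4), the inclusion $\fb\subseteq\eb$ into a locally free $\eb$ makes $\fb$ torsion-free, so $\fb\to\fb^{**}$ is injective with cokernel supported in codimension $\geqs2$; functoriality of the double dual yields $\fb^{**}\to\eb^{**}=\eb$ restricting on the image of $\fb$ to the inclusion, and a short chase (the kernel of $\fb^{**}\to\eb$ is torsion-free and meets the image of $\fb$ trivially, so it injects into the torsion sheaf $\fb^{**}/\fb$ and hence vanishes) identifies $\fb^{**}$ with a subsheaf $\fb'\supseteq\fb$ of $\eb$; then $\fb'/\fb$ is a torsion subsheaf of the torsion-free $\eb/\fb$, so $\fb'/\fb=0$ and $\fb=\fb^{**}$ is reflexive.

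\textbf{Expected main obstacle.} The local-ring bookkeeping in Steps~1--4 is routine; the one genuinely nontrivial input is fact (a)---the coherence of the $\mathcal{E}xt$-sheaves together with the bound $\codim\supp\mathcal{E}xt^q\geqs q$ in the complex-analytic (rather than algebraic) category---so I would cite it from \cite[Section~5.5]{MR909698} and build the rest on top of it as above.
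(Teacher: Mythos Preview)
The paper does not give its own proof of this proposition: it is stated in the Appendix purely as a citation to \cite[Section~5.5]{MR909698} and used as a black box. Your proposal therefore goes well beyond what the paper does, supplying an actual argument where the paper supplies none.

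Your argument is correct. The identification $Z=\bigcup_{q\geqs1}\supp\mathcal{E}xt^q_{\oo_X}(\fb,\oo_X)$ via Auslander--Buchsbaum, the DVR localization for~(1), the depth computation for~(2) (writing $\fb_{\mathfrak p}^{**}$ as a kernel of free modules and applying the depth lemma), the divisorial/UFD argument for~(3), and the double-dual chase for~(4) are all standard and valid. You correctly isolate the one nontrivial analytic input---coherence of $\mathcal{E}xt^q$ and the bound $\codim\supp\mathcal{E}xt^q\geqs q$---as the thing to import from Kobayashi; everything else is pure commutative algebra over the regular local rings $\oo_{X,x}$.
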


\begin{proposition}\label{prop-projection}
    Suppose $(X,\omega)$ is a compact complex surface with a pluriclosed metric. Let $\tilde{X}:=\Bl_{x_0}X\xrightarrow{\pi} X$ be the blow-up of $X$, and $E:=\pi\iv(x_0)$ be the exceptional divisor. Then we have
    \begin{align*}
        \int_{\tilde{X}}c_1^{BC}(\oo_{\tilde{X}}(E))\wedge\pi^*\omega=0.
    \end{align*}
\end{proposition}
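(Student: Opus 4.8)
The plan is to trade the smooth Chern representative of $c_1^{BC}(\oo_{\tilde X}(E))$ for the current of integration $\delta_E$ via the Poincar\'{e}--Lelong formula (\Cref{prop-poincare-lelong}), and then to exploit two geometric facts. First, $\pi$ contracts $E$ to the single point $x_0$, so the pullback of the $2$-form $\omega$ along $\pi|_E$ vanishes identically, which annihilates the $\delta_E$-term. Second, $\omega$ is pluriclosed --- on a surface the Gauduchon and pluriclosed conditions coincide, so $\dd\db\omega=0$, equivalently $\db\dd\omega=0$; since $\pi$ is holomorphic, $\db\dd\,\pi^*\omega=\pi^*(\db\dd\omega)=0$, which annihilates the remaining $\db\dd$-exact term after an integration by parts.

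Concretely, I would fix the canonical section $\rho\in H^0(\tilde X,\oo_{\tilde X}(E))$ vanishing on $E$ and any smooth Hermitian metric $h$ on $\oo_{\tilde X}(E)$, so that $\frac{\ii}{2\pi}\tr F_h$ is a smooth representative of $c_1^{BC}(\oo_{\tilde X}(E))$. By \Cref{prop-poincare-lelong},
\begin{align*}
    \frac{\ii}{2\pi}\tr F_h=\delta_E+\frac{\ii}{2\pi}\db\dd\log\|\rho\|_h^2
\end{align*}
as currents on $\tilde X$, so that
\begin{align*}
    \int_{\tilde X}c_1^{BC}(\oo_{\tilde X}(E))\wedge\pi^*\omega=\langle\delta_E,\pi^*\omega\rangle+\frac{\ii}{2\pi}\left\langle\db\dd\log\|\rho\|_h^2,\pi^*\omega\right\rangle.
\end{align*}
The first term equals $\int_E(\pi|_E)^*\omega$, which is $0$ since $\pi|_E$ is a constant map. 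For the second term I would integrate by parts twice, transferring $\db\dd$ onto $\pi^*\omega$; this turns it, up to an overall sign, into $\int_{\tilde X}\log\|\rho\|_h^2\cdot\db\dd\,\pi^*\omega=\int_{\tilde X}\log\|\rho\|_h^2\cdot\pi^*(\db\dd\omega)=0$. Adding the two vanishing contributions yields the proposition.

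The hard part --- indeed the only point requiring care --- is justifying this integration by parts, since $\log\|\rho\|_h^2$ has a logarithmic singularity along $E$. Locally $\log\|\rho\|_h^2=\log|z_1|^2+C^\infty$ with $E=\{z_1=0\}$, so $\log\|\rho\|_h^2\in L^1(\tilde X)$; as $\tilde X$ is compact without boundary, the distributional identity $\langle\db\dd u,\beta\rangle=\langle u,\db\dd\beta\rangle$ holds against every smooth form $\beta$ with no boundary contribution, which is exactly what the computation above uses. If one prefers to avoid currents, I would instead regularize $\log\|\rho\|_h^2$ by the smooth function $\log(\|\rho\|_h^2+\varepsilon)$, carry out the integration by parts in the smooth category so that $\db\dd$ falls on $\pi^*\omega$ and the integrand is identically zero, and then let $\varepsilon\to0$, using the weak convergence $\frac{\ii}{2\pi}\db\dd\log(\|\rho\|_h^2+\varepsilon)\to\frac{\ii}{2\pi}\tr F_h-\delta_E$ --- the content of Poincar\'{e}--Lelong --- to identify the limit of the left-hand side. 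Either route reduces the entire statement to the single algebraic fact $\pi^*(\db\dd\omega)=0$, i.e. to pluriclosedness of $\omega$.
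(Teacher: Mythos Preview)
Your proof is correct and takes a genuinely different route from the paper's. The paper avoids currents entirely: it constructs a specific smooth metric $h$ on $\oo_{\tilde X}(E)$ whose curvature form is compactly supported in a tubular neighbourhood $\tilde U=\pi^{-1}(U)$ of $E$ (by gluing the flat metric determined by $|\sigma|_{h_0}\equiv1$ outside $E$ to a metric pulled back from $\proj^1$ near $E$), then invokes local Aeppli exactness of the pluriclosed form --- $\omega=\dd\alpha+\db\beta$ on $U$ --- so that $\int_{\tilde X}\db\dd\log h\wedge\pi^*\omega=\int_{\tilde U}\db\dd\log h\wedge(\dd\pi^*\alpha+\db\pi^*\beta)$ vanishes by Stokes. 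Your argument instead applies Poincar\'e--Lelong (already established as \Cref{prop-poincare-lelong}) for an \emph{arbitrary} metric: the $\delta_E$ contribution dies because $\pi|_E$ is constant, and the $\db\dd\log\|\rho\|_h^2$ contribution dies by the very definition of the distributional $\db\dd$ together with the global identity $\dd\db(\pi^*\omega)=0$. Your route is shorter and uses the pluriclosed hypothesis more directly, dispensing with both the tailored metric and the local Aeppli lemma; the paper's approach has the virtue of staying entirely within the smooth category.
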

\begin{proof}
    If $(X,\omega)$ is K\"{a}hlerian, then it follows from the projection formula\cite[Lemma 1.1]{MR749574}. If $(X,\omega)$ is non-K\"{a}hlerian, we follow the argument in \cite[p.630]{MR939923}, \cite[p.182-187]{MR507725}.

    Since we have $\dd\db\omega=0$, then there exists a neighborhood $U$ of $x_0$, such that $\omega=\dd\alpha+\db\beta$ in $U$ for some $\alpha\in\Lambda^{0,1}$, $\beta\in\Lambda^{1,0}$. Let $\tilde{U}:=\pi\iv(U)$. By the construction of the blow-up, there exists a projection $p:\tilde{U}\to E\simeq \proj^1$, after possibly shrinking $U$. Let $\sigma\in H^0(\tilde{X},\oo_{\tilde{X}}(E))$ be the canonical section, then $E$ is exactly the vanishing set of $\sigma$. Let $h_0$ be the Hermitian metric on $\oo_{\tilde{X}}(E)$ over $\tilde{X}\backslash E$ such that $|s|_{h_0}\equiv 1$. Take $h_1$ be the Hermitian metric on $\oo_{\tilde{X}}(E)$ over $\tilde{U}$ by pulling backward the standard metric over $\proj^1$. Then we take $h:=(1-\psi)h_0+\psi h_1$, where $\psi$ is a cut-off function with $\supp \psi\subset\tilde{U}$ and $\psi=1$ near $E$.
    \begin{align*}
        \int_{\tilde{X}}c_1^{BC}(\oo_{\tilde{X}}(E))\wedge\pi^*\omega
        &=\frac{\ii}{2\pi}\int_{\tilde{X}}\db\dd\log h\wedge\pi^*\omega\\
        &=\frac{\ii}{2\pi}\int_{\tilde{U}}\db\dd\log h\wedge\pi^*\omega\\
        &=\frac{\ii}{2\pi}\int_{\tilde{U}}\db\dd\log h\wedge\pi^*(\dd\alpha+\db\beta)\\
        &=\frac{\ii}{2\pi}\int_{\tilde{X}}\db\dd\log h\wedge(\dd\pi^*\alpha+\db\pi^*\beta)\\
        &=0.
    \end{align*}
\end{proof}

\bibliographystyle{alpha}
\bibliography{ref.bib}

\begin{thebibliography}{BPVdV84}

\bibitem[Biq96]{MR1373062}
Olivier Biquard.
\newblock Sur les fibr\'es paraboliques sur une surface complexe.
\newblock {\em J. London Math. Soc. (2)}, 53(2):302--316, 1996.

\bibitem[BPVdV84]{MR749574}
W.~Barth, C.~Peters, and A.~Van~de Ven.
\newblock {\em Compact complex surfaces}, volume~4 of {\em Ergebnisse der Mathematik und ihrer Grenzgebiete (3) [Results in Mathematics and Related Areas (3)]}.
\newblock Springer-Verlag, Berlin, 1984.

\bibitem[Buc88]{MR939923}
N.~P. Buchdahl.
\newblock Hermitian-{E}instein connections and stable vector bundles over compact complex surfaces.
\newblock {\em Math. Ann.}, 280(4):625--648, 1988.

\bibitem[Dem97]{demailly1997complex}
Jean-Pierre Demailly.
\newblock {\em Complex analytic and differential geometry}.
\newblock 1997.

\bibitem[Don92]{MR1165874}
S.~K. Donaldson.
\newblock Boundary value problems for {Y}ang-{M}ills fields.
\newblock {\em J. Geom. Phys.}, 8(1-4):89--122, 1992.

\bibitem[Gau84]{MR0742896}
Paul Gauduchon.
\newblock La {$1$}-forme de torsion d'une vari\'{e}t\'{e} hermitienne compacte.
\newblock {\em Math. Ann.}, 267(4):495--518, 1984.

\bibitem[GH78]{MR507725}
Phillip Griffiths and Joseph Harris.
\newblock {\em Principles of algebraic geometry}.
\newblock Pure and Applied Mathematics. Wiley-Interscience [John Wiley \& Sons], New York, 1978.

\bibitem[JL25]{jiangli2025}
Tianshu Jiang and Jiayu Li.
\newblock Kobayashi-hitchin correspondence for saturated reflexive parabolic sheaves on k\"ahler manifolds, 2025.

\bibitem[Kob87]{MR909698}
Shoshichi Kobayashi.
\newblock {\em Differential geometry of complex vector bundles}, volume~15 of {\em Publications of the Mathematical Society of Japan}.
\newblock Princeton University Press, Princeton, NJ; Princeton University Press, Princeton, NJ, 1987.
\newblock Kan\^o{} Memorial Lectures, 5.

\bibitem[Li00]{MR1775134}
Jiayu Li.
\newblock Hermitian-{E}instein metrics and {C}hern number inequalities on parabolic stable bundles over {K}\"ahler manifolds.
\newblock {\em Comm. Anal. Geom.}, 8(3):445--475, 2000.

\bibitem[LN99]{MR1701135}
Jiayu Li and M.~S. Narasimhan.
\newblock Hermitian-{E}instein metrics on parabolic stable bundles.
\newblock {\em Acta Math. Sin. (Engl. Ser.)}, 15(1):93--114, 1999.

\bibitem[LT95]{MR1370660}
Martin L\"ubke and Andrei Teleman.
\newblock {\em The {K}obayashi-{H}itchin correspondence}.
\newblock World Scientific Publishing Co., Inc., River Edge, NJ, 1995.

\bibitem[LY87]{MR915839}
Jun Li and Shing-Tung Yau.
\newblock Hermitian-{Y}ang-{M}ills connection on non-{K}\"ahler manifolds.
\newblock In {\em Mathematical aspects of string theory ({S}an {D}iego, {C}alif., 1986)}, volume~1 of {\em Adv. Ser. Math. Phys.}, pages 560--573. World Sci. Publishing, Singapore, 1987.

\bibitem[Moc06]{MR2310103}
Takuro Mochizuki.
\newblock Kobayashi-{H}itchin correspondence for tame harmonic bundles and an application.
\newblock {\em Ast\'erisque}, (309):viii+117, 2006.

\bibitem[MS80]{MR575939}
V.~B. Mehta and C.~S. Seshadri.
\newblock Moduli of vector bundles on curves with parabolic structures.
\newblock {\em Math. Ann.}, 248(3):205--239, 1980.

\bibitem[NS65]{MR184252}
M.~S. Narasimhan and C.~S. Seshadri.
\newblock Stable and unitary vector bundles on a compact {R}iemann surface.
\newblock {\em Ann. of Math. (2)}, 82:540--567, 1965.

\bibitem[Sim88]{MR944577}
Carlos~T. Simpson.
\newblock Constructing variations of {H}odge structure using {Y}ang-{M}ills theory and applications to uniformization.
\newblock {\em J. Amer. Math. Soc.}, 1(4):867--918, 1988.

\bibitem[ST71]{MR287033}
Yum-tong Siu and G\"unther Trautmann.
\newblock {\em Gap-sheaves and extension of coherent analytic subsheaves}, volume Vol. 172 of {\em Lecture Notes in Mathematics}.
\newblock Springer-Verlag, Berlin-New York, 1971.

\bibitem[SW01]{MR1863850}
Brian Steer and Andrew Wren.
\newblock The {D}onaldson-{H}itchin-{K}obayashi correspondence for parabolic bundles over orbifold surfaces.
\newblock {\em Canad. J. Math.}, 53(6):1309--1339, 2001.

\bibitem[ZZZ21]{MR4237961}
Chuanjing Zhang, Pan Zhang, and Xi~Zhang.
\newblock Higgs bundles over non-compact {G}auduchon manifolds.
\newblock {\em Trans. Amer. Math. Soc.}, 374(5):3735--3759, 2021.

\end{thebibliography}
\end{document}